\newfont{\Bb}{msbm10 scaled\magstep0}
\newtheorem{lemma}{Lemma}[section]
\newtheorem{remark}{Remark}[section]
\newenvironment{proof}[1][\textit{Proof}]{\begin{trivlist}
\item[\hskip \labelsep {\bfseries #1}]}{\end{trivlist}}
\newcommand{\qed}{\nobreak \ifvmode \relax \else
      \ifdim\lastskip<1.5em \hskip-\lastskip
\vskip-1.2em
   \hskip10em plus0em minus0.5em \fi \nobreak
      \vrule height 0.4em width 0.3em depth0.25em}
\title{Multi-fidelity error estimation accelerates greedy model reduction of complex dynamical systems}\author{Lihong Feng \thanks{Max Planck Institute for Dynamics of Complex Technical Systems, Sandtorstrasse 1, 39106 Magdeburg, Germany~{\tt feng@mpi-magdeburg.mpg.de}}
, Luigi Lombardi\thanks{Luigi Lombardi is with Micron Semiconductor, 67051 Avezzano, Italy.~{\tt luigilombardi89@gmail.com}}
, Giulio Antonini\thanks{Giulio Antonini is with the UAq EMC Laboratory,
Department of Industrial and Information
Engineering and Economics, University of L'Aquila, I-67100 L'Aquila, Italy.~{\tt  giulio.antonini@univaq.it}}
, and Peter Benner\thanks{Max Planck Institute for Dynamics of Complex Technical Systems, Magdeburg, Germany and Fakultät für Mathematik, Otto-von-Guericke-Universität Magdeburg, Germany.~{\tt benner@mpi-magdeburg.mpg.de}}
}
\begin{document}



\maketitle

\begin{abstract}
Model order reduction usually consists of two stages: the offline stage and the online stage. The offline stage is the expensive part that sometimes takes hours till the final reduced-order model is derived, especially when the original model is very large or complex. Once the reduced-order model is obtained, the online stage of querying the reduced-order model for simulation is very fast and often real-time capable. This work concerns a strategy to significantly speed up the offline stage of model order reduction for large and complex systems. In particular, it is successful in accelerating the greedy algorithm that is often used in the offline stage for reduced-order model construction. We propose multi-fidelity error estimators and replace the high-fidelity error estimator in the greedy algorithm. Consequently, the computational complexity at each iteration of the greedy algorithm is reduced and the algorithm converges more than 3 times faster without incurring noticeable accuracy loss.  
\end{abstract}

\section{Introduction}
Model order reduction (MOR) has achieved much success in many areas of computational science with its capability of realizing real-time simulation and providing accurate results. Different MOR methods, their applications and the promising results they produce can be found in the survey papers~\cite{morAnt05, morBauBF14, morBenGW15} and books~\cite{morAntBG20, morBenCOetal17, morhandbookV1, morhandbookV2, morhandbookV3, morQuaMN16}. 

MOR needs an offline stage for constructing the ROM. For many intrusive MOR methods that are based on projection, the offline stage is usually realized via a greedy algorithm. The greedy algorithm is used to properly select {\it important} parameter samples that contribute most to the solution space. The offline computational time is basically the runtime of the greedy algorithm. For large-scale systems, the offline computation is expensive and the runtime is often longer than several hours even when run on a high-performance server. Sometimes, the system is not very large, for example, the number of degrees of freedom is only $O$($10^5$), but the system structure is complicated, so that the greedy algorithm still takes long time to converge. 

It is known that an efficient error estimator makes the greedy algorithm successful in producing an accurate ROM without running many iterations. Therefore, many efforts have been made in this direction to develop computable error estimators for different problems~\cite{morFenAB17, morFenB21, morGre05, morGre12, morGreMNetal07, morGreP05, morGruFH20, morHaaO08b, morHaaO11, morHaiOetal18, morRov03, morSchWH20, morSmeZP19, morVerPRetal03}. However, more attention has been paid to improve the effectivity or accuracy of the error estimator than to develop more efficient strategies to accelerate the greedy process~\cite{morCheFB22, morHaiOetal18, morSchWH20, morSmeZP19, morZhaFLetal15}. Recently, some techniques are proposed to improve the adaptivity of the greedy algorithm~\cite{morDavH15, morBenEEetal18b, morCheFB22, morTaiA15}. 

In~\cite{morCheFB22, morCheFdetal21a}, we proposed a surrogate model for error estimation, and proposed an adaptive greedy algorithm by alternatively using this surrogate error estimator and the original error estimator during the greedy algorithm. The focus in~\cite{morCheFB22, morCheFdetal21a} was to make the greedy process adaptive by starting from a coarse training set of a small number of parameter samples, and adaptively update the coarse training set with the aid of a surrogate error estimator. The original error estimator is computed only over the coarse training set, while the surrogate error estimator helps to pick out candidates of {\it important} parameter samples from a fine training set, which are then collected in the coarse training set. 

In this work, we emphasize the role of the surrogate error estimator and propose the concept of bi-fidelity error estimation and multi-fidelity error estimation. In fact, a bi-fidelity error estimation has been used in the adaptive greedy algorithm proposed in~\cite{morCheFB22, morCheFdetal21a} without being formally defined, i.e., the original (high-fidelity) error estimator, and the surrogate (low-fidelity) error estimator. To further improve the convergence speed of the greedy algorithm, we propose multi-fidelity error estimation built upon the bi-fidelity error estimation. Here, we use a more efficient high-fidelity error estimator than the two different high-fidelity error estimators used in~\cite{morCheFB22, morCheFdetal21a}. Although the proposed multi-fidelity error estimation is dependent on the original high-fidelity error estimator, the idea of using multi-fidelity error estimation is general and can be extended to develop multi-fidelity error estimation associated with other high-fidelity error estimators. 

Unlike the problems considered in~\cite{morCheFB22, morCheFdetal21a}, whose ROMs can be constructed by standard greedy algorithms within seconds to minutes, we consider in this work much more complicated problems. On the same computer, the standard greedy algorithm takes more than half a day to converge for such problems. By using the proposed multi-fidelity error estimator, the greedy algorithm achieves 4x speed-up and produces ROMs with little loss of accuracy. The speed-up is also higher than those reported in~\cite{morCheFB22, morCheFdetal21a} by using the bi-fidelity error estimation, which is usually 2x.

In the next section, we present the greedy algorithm in the standard form. Then we analyze some ingredients of the algorithm, which contribute most to the computational cost. Starting from those computationally expensive parts, we develop possible strategies to reduce the computational complexity in Section~\ref{sec:multifidelity}. As a consequence, it becomes clear that the resulting strategy develops a greedy algorithm with multi-fidelity error estimation. The proposed algorithm is then applied to large time-delay systems with many delays. The numerical tests on three large time-delay systems with more than 100 delays are demonstrated in Section~\ref{sec:results}. Conclusions are given in the end. 

\section{Standard greedy algorithm}
\label{sec:greedy_standard}

The standard greedy algorithm was first proposed for steady systems without time evolution. Then it was extended to POD-greedy for dynamical systems, which is used to construct the ROM using snapshots in the time domain. Later the greedy algorithm found its capability in adaptively choosing interpolation points for frequency-domain MOR methods~\cite{morFenAB17, morFenB21}. The greedy algorithm for steady systems and frequency-domain MOR has the same formulation, whereas POD-greedy for time-domain MOR of time-dependent systems needs an SVD step at each greedy iteration. In this work, we focus on the greedy algorithm, though the proposed scheme can be easily extended to POD-greedy. 
\begin{algorithm}
\caption{Standard greedy algorithm}
\label{alg:greedy}
\begin{algorithmic}[1]
\REQUIRE{the FOM, a training set $\Xi$ composed of parameter samples taken from the parameter domain $\mathcal P$, error tolerance tol$<1$,  $\Delta(\mu)$ to estimate the error.}
\ENSURE{Projection matrix $V$.}
\STATE Choose initial parameter $\mu^* \in \Xi$.
\STATE $V \gets \emptyset$, $\varepsilon=1$.
\WHILE{$\varepsilon>$tol}
\STATE Compute the snapshot(s) $x(\mu^*)$ by solving the FOM at $\mu=\mu^*$. 
\STATE Update $V$ by $V=\mathrm{orth}\{V, x(\mu^*)\}$,  (e.g., using the modified Gram-Schmidt process with deflation.)
\STATE Compute $\mu^*$ such that $\mu^*=\arg\max\limits_{\mu \in \Xi} \Delta(\mu)$. 
\STATE $\varepsilon=\Delta(\mu^*)$.
\ENDWHILE
\end{algorithmic}
\end{algorithm}
We consider constructing a ROM for the following full-order model (FOM) using the greedy algorithm,
\begin{equation}
F(x(\mu),\mu)=B(\mu).
\end{equation} 
Here, $F(x(\mu), \mu) \in \mathbb C^{n \times n_I}$, $x(\mu) \in \mathbb C^{n \times n_I}$, and $B(\mu) \in \mathbb C^{n\times n_I}$. $\mu \in \mathcal P$ is a parameter in the parameter domain $\mathcal P$. The variable $n$ is the order of the FOM, which can be the number of degrees of freedom after numerical discretization of PDEs describing a physical phenomenon. The proposed algorithms are also applicable to problems with $n_I>1$.

The ROM can be obtained via Galerkin projection using a projection matrix $V \in \mathbb R^{n\times r}$, $r\ll n$, as below,
\begin{equation}
\hat F(Vz(\mu),\mu)=\hat b(\mu),
\end{equation}
 where $\hat F(Vz(\mu),\mu)=V^Tf(Vz(\mu),\mu) \in \mathbb C^{r \times n_I}$, $z(\mu) \in \mathbb C^{r\times n_I}$, and $\hat B(\mu)=V^T b(\mu) \in \mathbb C^{r\times n_I}$. 
 
The standard greedy process used to compute the projection matrix $V$ is described in Algorithm~\ref{alg:greedy}.
Step 4 in Algorithm~\ref{alg:greedy} solves the FOM at $\mu^*$, and Step 6 computes an error estimator $\Delta(\mu)$ at all $\mu$ in $\Xi$. These two steps constitute the most computational expensive part of the greedy algorithm. However, Step 4 is unavoidable, since $x(\mu^*)$ is needed for the reduced basis construction.The computational complexity of Step 6 could be reduced, if the cardinality of $\Xi$, i.e., $|\Xi|$ is kept small, so that $\Delta(\mu)$ needs not be evaluated at many parameter samples. This is the motivation of the surrogate error estimator proposed in~\cite{morCheFB22, morCheFdetal21a}. We call the surrogate error estimator $\Delta_l(\mu)$ the low-fidelity error estimator as compared to the original error estimator $\Delta(\mu)$, since $\Delta_l(\mu)$ is only an approximation to $\Delta(\mu)$, but is much cheaper to compute.

In the next section, we present a greedy algorithm using bi-fidelity error estimation, where the low-fidelity error estimator is computed following the method in~\cite{morCheFB22, morCheFdetal21a}. Based on this, a greedy algorithm using multi-fidelity error estimation associated with a particular high-fidelity error estimator for MOR of linear parametric systems, is proposed. 

\section{Greedy algorithm with bi-fidelity and multi-fidelity error estimation}
\label{sec:multifidelity}
This section presents greedy algorithms with bi-fidelity and multi-fidelity error estimation, respectively. 

\subsection{Greedy algorithm with bi-fidelity error estimation}
\label{sec:bifidelity}
Algorithm~\ref{alg:greedy_bifidelity} is the greedy algorithm with bi-fidelity error estimation. Its original version using a different high-fidelity error estimator was firstly proposed in~\cite{morCheFB22}. 
\begin{algorithm}[t]
\caption{Greedy algorithm with bi-fidelity error estimation}
\label{alg:greedy_bifidelity}
\begin{algorithmic}[1]
\REQUIRE{the FOM, a training set $\Xi_c$ composed of a small number of parameter samples taken from the parameter domain $\mathcal P$, a set $\Xi_f$ composed a large number of parameter samples of $\mu$ from $\mathcal P$, error tolerance tol$<1$, $\Delta(\mu)$ to estimate the error.}
\ENSURE{Projection matrix $V$.}
\STATE Choose initial parameter $\mu^* \in \Xi_c$.
\STATE $V \gets \emptyset$, $\varepsilon=1$.
\WHILE{$\varepsilon>$tol}
\STATE Compute the snapshot(s) $x(\mu^*)$ by solving the FOM at $\mu=\mu^*$. 
\STATE Update $V$ by $V=\mathrm{orth}\{V, x(\mu^*)\}$,  (e.g., using the modified Gram-Schmidt process with deflation.)
\STATE Compute $\mu^*$ such that $\mu^*=\arg\max\limits_{\mu \in \Xi_c} \Delta(\mu)$. 
\STATE Compute $\mu^o$ such that $\mu^o=\arg\min\limits_{\mu \in \Xi_c} \Delta(\mu)$.  
\STATE Compute the low-fidelity error estimator $\Delta_l(\mu)$ using values of $\Delta(\mu)$ corresponding to the samples of $\mu$ in $\Xi_c$ via~(\ref{eq:RBF}) and (\ref{eq:RBF_weights}). 
\STATE Evaluate $\Delta_l(\mu)$ over $\Xi_f$ and pick out a parameter $\mu_c$ from the large parameter set $\Xi_f$ corresponding to the largest value of $\Delta_l(\mu)$, i.e., $\mu_c=\arg\max\limits_{\mu \in \Xi_f}$ from $\Xi_f$. 
\STATE Update the small parameter set $\Xi_c$: if $\Delta_l(\mu_c)>$tol, enrich $\Xi_c$ with $\mu_c$, i.e., $\Xi_c=\{\Xi_c, \mu_c\}$, if $\Delta(\mu^o)<$tol, remove $\mu^o$ from $\Xi_c$: $\Xi_c=\Xi_c \backslash \mu^o$.
\STATE $\varepsilon=\Delta(\mu^*)$.
\ENDWHILE
\end{algorithmic}
\end{algorithm}
The key step of Algorithm~\ref{alg:greedy_bifidelity} is Step 8, where the low-fidelity error estimator $\Delta_l(\mu)$ is computed using values of $\Delta(\mu)$ at the samples of $\mu$ in the small parameter set $\Xi_c$. Basically, $\Delta_l(\mu)$ is represented by a weighted sum of radial basis functions (RBFs), i.e., 
\begin{equation}
\label{eq:RBF}
\Delta_l(\mu)=\sum\limits_{i=1}^m w_i \Phi(\mu-\mu_i),
\end{equation}
where $\Phi(\mu)$ are RBFs, $\mu_i$ are the samples in $\Xi_c$, and $m$ is the cardinality of $\Xi_c$, which is small. Once $w_i$ are known, the low-fidelity error estimator $\Delta_l(\mu)$ is known. The weights $w_i$ are computed via enforcing $\Delta_l(\mu)$ to interpolate $\Delta(\mu)$ at $\mu_j, \forall \mu_j \in \Xi_c$, i.e., $\Delta_l(\mu_j)=\Delta(\mu_j)$. Inserting $\mu=\mu_j \in \Xi_c$ into~(\ref{eq:RBF}), the weights $w_i$ can be computed by solving the linear system as below,
\begin{equation}
\label{eq:RBF_weights}
\begin{array}{rcl}\left[
\begin{array}{cccc}
\Phi(\mu_1-\mu_1) & \ldots & \Phi(\mu_1-\mu_m)\\
\vdots & \vdots & \vdots \\
\Phi(\mu_m-\mu_1) & \ldots & \Phi(\mu_m-\mu_m)
\end{array}\right]
\left[
\begin{array}{c}
w_1\\
\vdots\\
w_m
\end{array}\right] & = &\left[
\begin{array}{c}
\Delta(\mu_1)\\
\vdots\\
\Delta(\mu_m)
\end{array}\right].
\end{array}
\end{equation}
Since values of $\Delta(\mu)$ at $\mu \in \Xi_c$ are available, the weights can be easily computed by solving the above small linear system with $m \times m$ being the dimension of the coefficient matrix. As this is a rather small system, we usually do not observe ill conditioning. Otherwise, we can use a regularized version of (4)~\cite{morCheFB22, morCheFdetal21a}.

At each iteration of the bi-fidelity greedy algorithm, the linear system is solved for once (Step 8), then the low-fidelity error estimator $\Delta_l(\mu)$ is evaluated over a larger parameter set $\Xi_f$ using the weighted sum in~(\ref{eq:RBF}) (Step 9). This process of computing the weights and evaluating $\Delta_l(\mu)$ is much faster than evaluating the high-fidelity error estimator over a training set $\Xi$ whose cardinality $|\Xi|$ is much larger than $|\Xi_c|$. This is usually the case for the standard greedy algorithm, where $|\Xi|>|\Xi_c|$. Finally, at each iteration of the bi-fidelity algorithm, the total computational cost of Steps 6-8: computing the high-fidelity error estimator $\Delta(\mu)$ over $\Xi_c$, solving the linear system~(\ref{eq:RBF_weights}) and evaluating the low-fidelity erorr estimator $\Delta_l(\mu)$ over $\Xi_f$ is still much cheaper
than computing the high-fidelity error estimator $\Delta(\mu)$ over a training set $\Xi$, whose cardinality is, e.g., twice that of $|\Xi_c|$, as shown in the numerical tests. 

Besides computing $\mu^*$ corresponding to the maximal value of the error estimator $\Delta(\mu)$ over $\Xi_c$, the minimal value of $\Delta(\mu)$ is also computed in Step 7. The corresponding parameter $\mu^o$ could be deleted from $\Xi_c$ if $\Delta(\mu^o)$ is already below the tolerance tol, see Step 10. In this way, the  cardinality of the training set $\Xi_c$ remains almost constant, and can further save computations as compared with enriching $\Xi_c$ only. We will show in the numerical results that adding and removing samples to and from $\Xi_c$ gets ROMs with similar accuracy (even smaller) as only adding samples to $\Xi_c$, but leads to even faster convergence of the greedy algorithm. 

\begin{remark}
\label{remark:bifidelity}
In Step 9, it is also possible to choose more than one parameter from $\Xi_f$ by modifying Step 9 as: choose $n_\textrm{add}$ samples from $\Xi_f$ corresponding to $n_\textrm{add}$ largest values of $\Delta_l(\mu)$. Similarly, In Step 7, one can also choose $n_\textrm{del}>1$ parameter samples corresponding to $n_\textrm{del}$ smallest values of $\Delta(\mu)$ from $\Xi_c$. However, this will more or less increase the computational time at each iteration, since more computations are needed to choose those samples. Furthermore, to make sure that only samples at which 
$\Delta_l(\mu)$ is larger than the tolerance tol are added to $\Xi_c$, and only samples at which $\Delta(\mu)$ is smaller than tol are removed, additional calculations are necessary to check if all the selected samples meet the above criteria and should be finally selected or removed (see Step 10). Therefore, adding/removing at most one parameter sample each time should be more efficient. In the numerical tests, we also show results when 
$n_\textrm{add}=n_\textrm{del}=2$ and $n_\textrm{add}=n_\textrm{del}=5$ at each iteration of Algorithm~\ref{alg:greedy_bifidelity}. 
\end{remark}

The bi-fidelity error estimation is general and can be applied to any high-fidelity error estimators. For example, the high-fidelity error estimator in~\cite{morCheFB22} estimates the error of the ROM for nonlinear time-dependent parametric systems in the time domain, while the high-fidelity error estimator in~\cite{morCheFdetal21a} estimates the error of the ROM in the frequency-domain for linear parametric systems. 

\subsection{Greedy algorithm with multi-fidelity error estimation}
The multi-fidelity error estimation we are going to introduce depends on the formulation of the high-fidelity error estimator $\Delta(\mu)$. To illustrate the basic concept, we use an error estimator proposed in~\cite{morFenB21} as the high-fidelity error estimator and discuss how to further reduce the computational load by using multi-fidelity error estimation. 

\subsubsection{An error estimator for linear parametric systems}
The error estimator is applicable to estimating the output error of the ROM for FOMs in the following linear parametric form,
\begin{equation}
\label{eq:steady}
\begin{array}{rcl}
M(\mu) x(\mu)&=&B(\mu),\\
y(\mu)&=&C(\mu) x(\mu).
\end{array}
\end{equation}
Here, $M(\mu)\in \mathbb R^{n\times n}$, $B(\mu) \in \mathbb R^{n\times n_I}$, $C(\mu) \in \mathbb R^{n_O\times n}$ , $x(\mu) \in \mathbb R^n$, $y(\mu) \in \mathbb R^{n_O \times n_I}$. We consider the general case that both $B(\mu)$ and $C(\mu)$ are matrices, i.e. systems with multiple inputs and multiple outputs. The ROM of the above linear parametric system can be derived via Galerkin projection using a projection matrix $V$ composed of  the reduced basis. That is,
\begin{equation}
\label{eq:steady_ROM}
\begin{array}{rcl}
\hat M(\mu) z(\mu)&=&\hat B(\mu),\\
\hat y(\mu)&=&\hat C(\mu) z(\mu),
\end{array}
\end{equation}
where $\hat M(\mu)=V^TM(\mu)V$, $\hat B(\mu)=V^TB(\mu)$, $\hat C(\mu)=C(\mu)V$.

For the general situation when both $B(\mu)$ and $C(\mu)$ are matrices, 
the error of the $i,j$-th entry of the output matrix $\hat y(\mu)$ is 
\begin{equation}
\label{eq:error}
\begin{array}{l}
|y_{ij}(\mu)-\hat y_{ij}(\mu)|\\
=|C_i(\mu)(M^{-1}(\mu)B(\mu)-V\hat {M}^{-1}(\mu)\hat B_j(\mu))| \\
= |C_i(\mu)M^{-1}(\mu)(B_j(\mu)-M(\mu)\underbrace{V \hat {M}^{-1}(\mu)\hat B_j(\mu))}_{\hat x_j(\mu):=Vz_{j}(\mu)|} \\
=|C_i(\mu)M^{-1}(\mu)r_j(\mu)|,
\end{array}
\end{equation}
where $C_i(\mu)$ is the $i$-th row of $C(\mu)$ and $B_j(\mu)$ is the $j$-th column of $B(\mu)$. Here, we have defined: $z_{j}(\mu)=\hat {M}^{-1}(\mu)\hat B_j(\mu)$, i.e., $\hat {M}(\mu) z_j(\mu)=\hat B_j(\mu)$, $\hat x_j (\mu):=Vz_j (\mu)$ and $r_j (\mu):=B_j(\mu)-M(\mu)\hat x_j (\mu)$.
It is clear that
$$\hat {M}(\mu) z_j(\mu)=\hat B_j(\mu)$$ 
is a reduced-order model of
\begin{equation}
\label{eq:primal} 
M(\mu) x_j(\mu)=B_j(\mu),
\end{equation}
and $\hat x_j(\mu)\approx x_j (\mu)$, the $j$-th column of $x(\mu)$. Finally, $r_j(\mu)$ is the residual induced by $\hat x_j(\mu)$.

From the last equation in~(\ref{eq:error}), it is clear that to compute the absolute error of $\hat y_{ij}$, we need to solve a residual system: 
\begin{equation}
\label{eq:residual}
M(\mu) x_{r_j}(\mu)=r_j (\mu).
\end{equation}
Instead, we construct a ROM of it:
\begin{equation}
\label{eq:residual_ROM}
V_r^TM(\mu)V_r z_{r_j}(\mu)=V_r^Tr_j(\mu),
\end{equation} 
so that $x_{r_j}(\mu) \approx \hat x_{r_j}(\mu)=V_rz_{r_j}(\mu)$ . Finally,
$$|y_{ij}(\mu)-\hat y_{ij}(\mu)|\approx |C_i(\mu) \hat x_{r_j}(\mu)|.$$
Note that $\hat x_{r_j}(\mu)$ depends on $B_j(\mu)$, since $r_j(\mu)$ depends on $B_j(\mu)$.  Each column $B_j(\mu)$ is associated with a $\hat x_{r_j}(\mu)$. The overall error of $\hat y(\mu)$ as a matrix can be estimated as:
\begin{equation}
\label{eq:estimator}
\|y(\mu)-\hat y(\mu)\|_{\mathrm{max}}:=\max\limits_{i,j} |y_{ij}(\mu)-\hat y_{ij}(\mu)|\approx \max\limits_{i,j} |C_i(\mu) \hat x_{r_j}(\mu)|=:\tilde \Delta(\mu).
\end{equation}

$\tilde \Delta(\mu)$ defined in~(\ref{eq:estimator}) is one of the error estimators proposed in~\cite{morFenB21}, where the proposed error estimators were shown to outperform other existing error estimators in the literature~\cite{morSmeZP19, morFenAB17} in terms of both accuracy and computational efficiency. Furthermore, it has been discussed in~\cite{morFenB21} that $\tilde \Delta(\mu)$ is even more accurate but has less computational complexity than other proposed estimators, including the one used in~\cite{morCheFdetal21a}. Even with this error estimator, the greedy algorithm could take several hours to converge for some complex systems, for example, the time-delay systems we consider in this work. For such systems, although the standard greedy algorithm can already be accelerated by the bi-fidelity greedy algorithm, we suggest a possibility to further improve the bi-fidelity greedy algorithm by introducing multi-fidelity error estimation. 

We notice that in order to compute $\tilde \Delta(\mu)$, an extra projection matrix $V_r$ has to be constructed for $\hat x_{r_j}(\mu)$. Although $\hat x_{r_j}(\mu)$ is dependent on the individual column of $B(\mu)$, the matrix $V_r$ can be uniformly constructed based on the whole matrix $B(\mu)$. Then $V_r$ is valid for any column of $B(\mu)$. It is proved in~\cite{morFenB21} that taking $V_r=V$ leads to $\tilde \Delta(\mu)$ identically zero for all $\mu$. Therefore, $V_r$ should be additionally computed.

\subsubsection{Standard greedy algorithm using $\tilde \Delta(\mu)$}
For easy understanding of the multi-fidelity error estimation, 
we first present Algorithm~\ref{alg:greedy_delta1pr}, the standard greedy algorithm using $\tilde \Delta(\mu)$ in~(\ref{eq:estimator}) as the error estimator. There, some additional steps are added to compute $V_r$, see Step 5, Steps 7-8.  In Step 7 of Algorithm~\ref{alg:greedy_delta1pr}, $V_r$ is not only updated by $x(\mu^r)$, but also by $V$. This is due to the fact that the solution $x_{r_j}(\mu)$ to the residual system in~(\ref{eq:residual}) can be written as
\begin{equation}
\label{eq:xr}
\begin{array}{rcl}
x_{r_j}(\mu)&=&M(\mu)^{-1}r_j (\mu) \\
&=&M(\mu)^{-1}(B_j(\mu)-M(\mu)\hat x_j (\mu))\\
&=&M(\mu)^{-1}B_j(\mu)-V z_j(\mu)\\
&\approx& \tilde V_r z_{r_j}-V z_j(\mu).
\end{array}
\end{equation}
It is clear that $x_{r_j}(\mu)$ is a linear combination of $(M(\mu))^{-1}B_j(\mu)$ and the columns of $V$. Therefore, $V$ contributes to the subspace approximating the solution space of $x_{r_j}(\mu)$ and cannot be neglected. It is also noticed that $(M(\mu))^{-1}B_j(\mu)$ is in fact the solution $x_j(\mu)$ in~(\ref{eq:primal}), while $Vz_j(\mu)$ is $\hat x_j(\mu)$ that approximates $x_j(\mu)$. This means $x_{r_j}(\mu)$ is the difference between $x_j(\mu)$ and $\hat x_j(\mu)$, which is a nonzero vector. Therefore, we should compute another matrix $\tilde V_r$, so that $x_j(\mu)\approx \tilde V_r z_{r_j}(\mu)$, but $\tilde V_r z_{r_j}(\mu)\neq \hat x_j(\mu)=V z_j(\mu)$. Finally, $x_{r_j}$ is approximated by the difference between $\tilde V_r z_{r_j}(\mu)$ and $V z_j(\mu)$. In other words, it is approximately represented as the linear combination of the columns of both $V_r$ and $V$. This approximation also explains Step 5 and Step 7 of Algorithm~\ref{alg:greedy_delta1pr}: Step 5 computes the reduced basis vectors contributing to $\tilde V_r$, Step 7 computes the complete reduced basis vectors contributing to $V_r$. New reduced basis vectors for both $V$ and $V_r$ are computed at each iteration of the greedy algorithm. Step 8 and Step 9 compute the new {\it important} parameter samples for $V_r$ and $V$, respectively. In general, $\mu^r$ should be different from $\mu^*$, since $\tilde \Delta(\mu) \neq \max\limits_{j=1,\ldots,n_I}\|r_j(\mu)-M(\mu)\hat x_{r_j}(\mu)\|$. Here, $r_j(\mu)-M(\mu)\hat x_{r_j}(\mu)$ is nothing but the residual induced by the approximate solution ($\hat x_{r_j}(\mu)$) to the residual system~(\ref{eq:residual}). 
\begin{algorithm}
\caption{Standard greedy algorithm using $\tilde \Delta(\mu)$ for linear parametric systems.}
\label{alg:greedy_delta1pr}
\begin{algorithmic}[1]
\REQUIRE{the FOM, a training set $\Xi$ composed of parameter samples taken from the parameter domain $\mu\in \mathcal P$, error tolerance tol$<1$.}
\ENSURE{Projection matrix $V$.}
\STATE Choose initial parameter $\mu^* \in \Xi$ for $V$, and initial parameter $\mu^r \neq \mu^*\in \Xi$ for $V_r$.
\STATE $V \gets \emptyset$, $V_r \gets \emptyset$, $\varepsilon=1$.
\WHILE{$\varepsilon>$tol}
\STATE Compute the snapshot(s) $x(\mu^*)$ by solving the FOM, i.e.  $x(\mu^*)=(M(\mu^*))^{-1} B(\mu^*)$.
\STATE Compute the snapshot(s) $x(\mu^r)$ by solving the FOM, i.e. $x(\mu^r)=(M(\mu^r))^{-1} B(\mu^r)$.
\STATE Update $V$ by $V=\mathrm{orth}\{V,x(\mu^*)\}$,  (e.g., using the modified Gram-Schmidt process with deflation.)

\STATE Update $V_r$ by $V_r=\mathrm{orth}\{V, V_r, x(\mu^r)\}$.

\STATE Compute $\mu^r$ such that $\mu^r=\arg\max\limits_{\mu \in \Xi} \max\limits_{j=1,\ldots,n_I}\|r_j(\mu)-M(\mu)\hat x_{r_j}(\mu)\|$, ($n_I$ is the total number of columns of $B(\mu)$).
\STATE Compute $\mu^*$ such that $\mu^*=\arg\max\limits_{\mu \in \Xi} \tilde \Delta(\mu)$. 
\STATE $\varepsilon=\tilde \Delta(\mu^*)$.
\ENDWHILE
\end{algorithmic}
\end{algorithm}
\subsubsection{Greedy algorithm with multi-fidelity error estimation}

The computational complexity of Algorithm~\ref{alg:greedy_delta1pr} using the error estimator $\tilde \Delta(\mu)$ comes from Steps 4-9. Efficiency of Step 9 can be improved by using the bi-fidelity error estimation as shown in Algorithm~\ref{alg:greedy_bifidelity}. The computations in Step 4, 6 are unavoidable, since $V$ is used to compute the ROM of the original FOM and should be updated till acceptable error tolerance is satisfied. In contrast, $V_r$ in Step 7 needs not be updated at every iteration. This implies that the ROM of the residual system does not have to be very accurate, since it is not the ROM that we seek, but an auxiliary ROM aiding the computation of $\tilde \Delta(\mu)$. 

An immediate consequence of Theorem 4.2 in~\cite{morFenB21} for single-input and single-output systems is the following Lemma for systems with multiple inputs and multiple outputs: 
\begin{lemma}
\label{theo:Ubound1pr}
The error of the output $\hat y(\mu)$ of the ROM~(\ref{eq:steady_ROM}) can be bounded as
\begin{equation}
\label{eq:bound1pr}
\tilde \Delta (\mu)-\delta(\mu)\leq \|y(\mu)-\hat y(\mu)\|_{\mathrm{max}} \leq \tilde \Delta(\mu)+\delta(\mu),
\end{equation}
where $\delta(\mu):=\max\limits_{i,j}|C_i(\mu)(x_{r_j}(\mu)-\hat x_{r_j}(\mu))|\geq 0$. 
\end{lemma}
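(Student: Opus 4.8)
The plan is to prove the bound directly from the exact error representation established in~(\ref{eq:error}), rather than re-deriving the single-input single-output estimate of Theorem~4.2 in~\cite{morFenB21}. First I would record the exact identity underlying the estimator. Since $x_{r_j}(\mu)$ solves the residual system~(\ref{eq:residual}) exactly, we have $x_{r_j}(\mu)=M^{-1}(\mu)r_j(\mu)$, and therefore the last line of~(\ref{eq:error}) gives the \emph{exact} entrywise error $|y_{ij}(\mu)-\hat y_{ij}(\mu)|=|C_i(\mu)x_{r_j}(\mu)|$. Taking the maximum over $i,j$ then yields $\|y(\mu)-\hat y(\mu)\|_{\mathrm{max}}=\max_{i,j}|C_i(\mu)x_{r_j}(\mu)|$. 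This puts the true error, the estimator $\tilde\Delta(\mu)=\max_{i,j}|C_i(\mu)\hat x_{r_j}(\mu)|$, and the defect $\delta(\mu)=\max_{i,j}|C_i(\mu)(x_{r_j}(\mu)-\hat x_{r_j}(\mu))|$ all on the same footing, as maxima of the entrywise scalar quantities $|C_i(\mu)x_{r_j}(\mu)|$, $|C_i(\mu)\hat x_{r_j}(\mu)|$, and $|C_i(\mu)(x_{r_j}(\mu)-\hat x_{r_j}(\mu))|$.

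Next, for each fixed pair $(i,j)$ I would split $x_{r_j}=\hat x_{r_j}+(x_{r_j}-\hat x_{r_j})$ and apply the triangle inequality to the scalar $C_i(\mu)x_{r_j}(\mu)$, giving the two-sided pointwise estimate $\bigl|\,|C_i(\mu)x_{r_j}(\mu)|-|C_i(\mu)\hat x_{r_j}(\mu)|\,\bigr|\le |C_i(\mu)(x_{r_j}(\mu)-\hat x_{r_j}(\mu))|$, valid for every $(i,j)$ and every $\mu$. This is the only analytic ingredient needed; everything else is combinatorial bookkeeping of the max.

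The step requiring care --- and the only place where one can easily slip --- is passing from the pointwise estimate to the maxima, because $\max$ does not commute with subtraction, so one cannot simply maximise the displayed inequality termwise. For the upper bound I would evaluate at a pair $(i^\star,j^\star)$ attaining $\max_{i,j}|C_i(\mu)x_{r_j}(\mu)|$ and bound $|C_{i^\star}(\mu)x_{r_{j^\star}}(\mu)|\le |C_{i^\star}(\mu)\hat x_{r_{j^\star}}(\mu)|+|C_{i^\star}(\mu)(x_{r_{j^\star}}(\mu)-\hat x_{r_{j^\star}}(\mu))|\le \tilde\Delta(\mu)+\delta(\mu)$, since each summand is dominated by its own maximum. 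For the lower bound I would instead evaluate at a pair attaining $\tilde\Delta(\mu)=\max_{i,j}|C_i(\mu)\hat x_{r_j}(\mu)|$ and argue symmetrically to obtain $\tilde\Delta(\mu)\le \|y(\mu)-\hat y(\mu)\|_{\mathrm{max}}+\delta(\mu)$, which rearranges to the claimed lower bound $\tilde\Delta(\mu)-\delta(\mu)\le\|y(\mu)-\hat y(\mu)\|_{\mathrm{max}}$. Finally, $\delta(\mu)\ge 0$ is immediate from its definition as a maximum of absolute values. I expect no genuine obstacle here; the main thing is to choose the maximising index appropriately for each of the two directions rather than forcing a single common index.
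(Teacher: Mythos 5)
Your proposal is correct and follows essentially the same route as the paper: both prove the entrywise reverse-triangle-inequality bound $\bigl|\,|C_i(\mu)x_{r_j}(\mu)|-|C_i(\mu)\hat x_{r_j}(\mu)|\,\bigr|\le \delta_{ij}(\mu)$ and then pass to the maxima over $(i,j)$, the paper doing so via the subadditivity $\max_{i,j}(a_{ij}+b_{ij})\le\max_{i,j}a_{ij}+\max_{i,j}b_{ij}$ where you instead evaluate at a maximising index pair. Your extra care about the non-commutation of $\max$ with subtraction is a slightly more explicit rendering of the same step, not a different argument.
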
 
\begin{proof}
From~(\ref{eq:error}), we know $$|y_{ij}(\mu)-\hat y_{ij}(\mu)|=|C_i(\mu)x_{r_j}(\mu)|\approx |C_i(\mu) \hat x_{r_j}(\mu)|.$$
Then 
\begin{equation}
\label{eq:upb}
\begin{array}{rcl}
|y_{ij}(\mu)-\hat y_{ij}(\mu)|&=&|C_i(\mu)x_{r_j}(\mu)|+|C_i(\mu)\hat x_{r_j}(\mu)|-|C_i(\mu)\hat x_{r_j}(\mu)|\\
&\leq& |C_i(\mu)\hat x_{r_j}(\mu)|+\underbrace{|C_i(\mu)x_{r_j}(\mu)-C_i(\mu)\hat x_{r_j}(\mu)|}_{\delta_{ij}(\mu)}.
\end{array}
\end{equation}
On the other hand, 
\begin{equation}
\label{eq:lowb}
\begin{array}{rcl}
|C_i(\mu)\hat x_{r_j}(\mu)|&=&|C_i(\mu)\hat x_{r_j}(\mu)|+|C_i(\mu) x_{r_j}(\mu)|-|C_i(\mu)x_{r_j}(\mu)|\\
&\leq&|C_i(\mu)x_{r_j}(\mu)|+\delta_{ij}(\mu).
\end{array}
\end{equation}
From~(\ref{eq:estimator}), (\ref{eq:lowb}) and the definition of $\delta(\mu)$, we have $$\tilde \Delta(\mu)=\max\limits_{i,j}|C_i(\mu)\hat x_{r_j}(\mu)| \leq \|y(\mu)-\hat y(\mu)\|_{\mathrm{max}}+\delta(\mu).$$
Similarly, from~(\ref{eq:upb}), we get
$$\|y(\mu)-\hat y(\mu)\|_{\mathrm{max}} \leq \tilde \Delta(\mu)+\delta(\mu).$$
This completes the proof. \qed
\end{proof}
From the definition of $\delta(\mu)$, it is seen that the more accurate the ROM of the residual system, the smaller $\delta(\mu)$ is. As a result, $\tilde \Delta(\mu)$ should measure the true error more accurately so that the {\it important} parameters it selects are closer to those selected by the true error, given the same training set $\Xi$. On the contrary, if the ROM of the residual system is less accurate, $\tilde \Delta(\mu)$ will be less accurate, too. However, at a certain stage, when $\tilde \Delta(\mu)$ is already small, the right-hand side of the residual system $r_j(\mu)$ will also be small, so that it can be expected that both $x_{r_j}(\mu)$ and $\hat x_{r_j}(\tilde \mu)$ are close to zero. This leads to a small $\delta(\mu)$. Variation of a small $\delta(\mu)$ will not cause big variation of the difference between $\tilde \Delta(\mu)$ and the true error $\|y(\mu)-\hat y(\mu)\|_{\mathrm{max}}$. The trend, though not the exact route, of error decay could still be anticipated so that {\it important} parameters corresponding to the error peaks can also be detected. The above analyses are also justified by the numerical results in the next section, see, e.g., Figure~\ref{fig:divider2} and Figure~\ref{fig:coplanar}. 

This motivates the multi-fidelity error estimation. We set a second tolerance $\epsilon>$tol, and when $\tilde \Delta(\mu)<\epsilon<1$, we stop updating the ROM of the residual system, i.e., stop implementing Step 5, Step 7 and Step 8 of Algorithm~\ref{alg:greedy_delta1pr}. The error estimator $\tilde \Delta(\mu)$ after this stage may not be as accurate as it would be when keep updating the ROM of the residual system. However, the difference should be small as $\tilde \Delta(\mu)$ is already below a small value $\epsilon$. Without implementing Step 5,  we have saved computations of simulating the FOM. For large and complex systems, solving the FOM even once is not fast. The computation in Step 7 is relatively cheap if the system is not very large. The computational cost in Step 8 is not low for certain complex problems, though some $\mu$-independent parts of $r_j(\mu)$ and $M(\mu)$ can be pre-computed. For example, this is the case for the time-delay systems in the next section. 

Stop updating the ROM of the residual system gives rise to a low-fidelity error estimator at later iteration steps of the greedy algorithm. When this low-fidelity error estimator is combined with $\Delta_l(\mu)$ in Algorithm~\ref{alg:greedy_bifidelity}, we obtain the multi-fidelity error estimation. This is detailed in Algorithm~\ref{alg:greedy_multifidelity}. Compared with the standard greedy algorithm, the overall saving in computational costs is noticeable, which can be seen from the numerical results in the next section.

The concept of multi-fidelity error estimation could also be applied to other high-fidelity error estimators. For example, Step 15 could be modified as ``Stop updating partial information of $\Delta(\mu)$", if some parts of the high-fidelity error estimator $\Delta(\mu)$ are not ``essential" for computing $\Delta(\mu)$. 
\begin{algorithm}[h]
\caption{Greedy algorithm with multi-fidelity error estimation}
\label{alg:greedy_multifidelity}
\begin{algorithmic}[1]
\REQUIRE{the FOM, a training set $\Xi_c$ composed of a small number of parameter samples taken from the parameter domain $\mu\in \mathcal P$, a set $\Xi_f$ composed of a large number of parameter samples of $\mu$ from $\mathcal P$, error tolerance tol$<1$.}
\ENSURE{Projection matrix $V$.}
\STATE Choose initial parameter $\mu^* \in \Xi_c$ for $V$, and initial parameter $\mu^r \neq \mu^*\in \Xi_c$ for $V_r$.
\STATE $V \gets \emptyset$, $V_r \gets \emptyset$, $\varepsilon=1$.
\WHILE{$\varepsilon>$tol}
\STATE Compute the snapshot(s) $x(\mu^*)$ by solving the FOM,  i.e. $x(\mu^*)=(M(\mu^*))^{-1} B(\mu^*)$. 
\STATE Compute the snapshot(s) $x(\mu^r)$ by solving the FOM, i.e. $x(\mu^r)=(M(\mu^r))^{-1} B(\mu^r)$.
\STATE Update $V$ by $V=\mathrm{orth}\{V,x(\mu^*)\}$ (e.g., using the modified Gram-Schmidt process with deflation).
\STATE Update $V_r$ by $V_r=\mathrm{orth}\{V, V_r, x(\mu^r)\}$.
\STATE Compute $\mu^*$ such that $\mu^*=\arg\max\limits_{\mu \in \Xi_c} \tilde \Delta(\mu)$. 
\STATE Compute $\mu^o$ such that $\mu^o=\arg\min\limits_{\mu \in \Xi_c} \tilde \Delta(\mu)$. 
\STATE Compute $\mu^r$ such that $\mu^r=\arg\max\limits_{\mu \in \Xi_c}\max\limits_{j=1,\ldots,n_I} \|r_j(\mu)-M(\mu)\hat x_{r_j}(\mu)\|$,  \hfill \% $n_I$ is the total number of columns of $B(\mu)$.
\STATE Compute the low-fidelity error estimator $\tilde \Delta_l(\mu)$ using values of $\tilde \Delta(\mu)$ corresponding to the samples of $\mu$ in $\Xi_c$ via~(\ref{eq:RBF}) and (\ref{eq:RBF_weights}). 
\STATE Evaluate $\tilde \Delta_l(\mu)$ over $\Xi_f$ and pick out a parameter $\mu_c$ from the large parameter set $\Xi_f$ corresponding to the largest value of $\tilde \Delta_l(\mu)$, i.e., $\mu_c=\arg\max\limits_{\mu \in \Xi_f}$ from $\Xi_f$. 
\STATE Update the small parameter set $\Xi_c$: if $\Delta_l(\mu_c)>$tol, enrich $\Xi_c$ with $\mu_c$, i.e., $\Xi_c=\{\Xi_c, \mu_c\}$, if $\Delta(\mu^o)<$tol, remove $\mu^o$ from $\Xi_c$: $\Xi_c=\Xi_c \backslash \mu^o$.
\STATE $\varepsilon=\tilde \Delta(\mu^*)$.
\IF {$\varepsilon < \epsilon$}
\STATE Stop performing Step 5, Step 7 and Step 10.   \hfill \% stop updating the ROM of the residual system.
\ENDIF
\ENDWHILE
\end{algorithmic}
\end{algorithm}

\subsection{Application to MOR for time-delay systems}
\label{sec:multifidelity_timedelay}
In this section, we consider applying Algorithm~\ref{alg:greedy_bifidelity}, the greedy algorithm with bi-fidelity error estimation, Algorithm~\ref{alg:greedy_delta1pr}, the standard greedy algorithm and Algorithm~\ref{alg:greedy_multifidelity}, the greedy algorithm with multi-fidelity error estimation to large-scale time-delay systems with many delays. The time-delay systems are defined as:
\begin{equation}
\label{eq:prelim:dde}
\begin{split}
\sum_{j=0}^d E_j \dot{x}(t - \tau_j) &= \sum_{j=0}^d A_j x(t - \tau_j) + B u(t), \\
y(t) &= C x(t),
\end{split}
\qquad \forall \ t \geq 0
\end{equation}
with an initial condition
$x(t) = \Phi(t) \in \mathbb C^n, \forall\ t \in [-\tau_d, 0]$.
Here,
$E_0, \ldots, E_d , A_0, \ldots, A_d \in \mathbb C^{n \times n}, B \in \mathbb C^{n \times n_I}, C \in \mathbb C^{n_O \times n},$ 
$0 = \tau_0 < \tau_1 < \ldots < \tau_d$ and $n$ is called the order of the delay system.
The transfer function of the delay system is defined as:
\begin{equation}
\label{eq:transfun}
H(s)=C {\mathcal  K}^{-1}(s)B,
\end{equation}
where ${\mathcal  K}(s) = s\sum_{j=0}^d E_j e^{-s\tau_j}- \sum_{j=0}^d A_j e^{-s\tau_j}$, $s=2\pi \jmath$ is the variable in the frequency domain, $f$ is the ordinary frequency with unit Hz and $\jmath$ is the imaginary unit. 

A ROM of the delay system, which has the same delays as the original system, can be obtained via Galerkin projection using a projection matrix $V\in \mathbb R^{n\times r}$, $r\ll n$, i.e.,
\begin{equation}
\label{eq:delay_ROM}
\begin{split}
\sum_{j=0}^d \hat E_j \dot{z}(t - \tau_j) &= \sum_{j=0}^d \hat A_j z(t - \tau_j) + \hat B u(t), \\
\hat y(t) &= \hat C z(t),
\end{split}
\qquad \forall \ t \geq 0,
\end{equation}
where $\hat E_j=V^TE_jV \in \mathbb R^{r\times r}, \hat A_j=V^TA_jV \in \mathbb R^ {r\times r}, \hat B=V^TB \in \mathbb R^{r\times n_I}, \hat C=CV \in \mathbb R^{n_O\times r}$, with $r \ll n$ being the order of the ROM. The original state vector $x(t)$
in~(\ref{eq:prelim:dde}) can be recovered by the approximation: $x(t)\approx Vz(t)$. The transfer function of the ROM is
\begin{equation*}
\label{eq:transfun_ROM}
\hat H(s)=\hat C \hat{\mathcal K}^{-1}(s)\hat B,
\end{equation*}
where 
$\hat {\mathcal  K}(s)=s \sum_{j=0}^d \hat E_j e^{-s\tau_j}- \sum_{j=0}^d \hat A_j e^{-s\tau_j}$.
The projection matrix $V$ can be constructed via approximating $H(s)$~\cite{morBeaG09} as follows. Note that $H(s)$ is nothing but the output $y( \mu)$ of the linear parametric system in~(\ref{eq:steady}), with $M(\mu)=\mathcal K(s)$, $B(\mu)=B$ and $\mu=s$, i.e.,
\begin{equation}
\label{eq:delay_freq}
\begin{array}{rcl}
\mathcal K(s)x(s)&=&B,\\
H(s)&=&C(s)x(s).
\end{array}
\end{equation}
The reduced transfer function $\hat H(s)$ is the output $\hat y(\mu)$ of the ROM in~(\ref{eq:steady_ROM}) with $\hat M(\mu)=\hat K(s)$ and $\hat B(\mu)=\hat B$. 

It is easy to see that the projection matrix $V$ that is used to construct the ROM~(\ref{eq:delay_ROM}) in the time domain is exactly the same matrix to obtain the reduced transfer function $\hat H(s)$. Therefore, $V$ can be obtained by constructing a ROM of system~(\ref{eq:delay_freq}) in the frequency domain, i.e., by approximating the transfer function $H(s)$. This can be done by the standard greedy Algorithm~\ref{alg:greedy_delta1pr} with the error estimator $\tilde \Delta(s)$, where $V$ is iteratively computed by choosing proper samples of $s$~\cite{morBeaG09, morAlfFLetal21}. In fact, the reduced transfer function $\hat H(s)$ interpolates the original transfer function $H(s)$ at the selected samples of $s$~\cite{morAlfFLetal21}. The matrix $M(\mu)$ in Steps 4-5 of Algorithm~\ref{alg:greedy_delta1pr} is now replaced by $\mathcal K(s)$. The difference of the coefficient matrix $\mathcal K(s)$ from a single matrix $M(\mu)$ in the usual case is its high complexity. To solve the system in~(\ref{eq:delay_freq}) is much more expensive than solving the system in~(\ref{eq:steady}) where $M(\mu)$ is a single matrix. On the one hand, the matrices constituting $\mathcal K(s)$ must be assembled to get $\mathcal K(s)$. On the other hand, the finally assembled matrix has some dense blocks, though each single matrix contributing to $\mathcal K(s)$ is sparse. 

To further improve the efficiency of the standard greedy algorithm, we propose to apply Algorithm~\ref{alg:greedy_bifidelity} and Algorithm~\ref{alg:greedy_multifidelity} to time-delay systems. The application is straightforward by simply replacing the FOM in~(\ref{eq:steady}) in both algorithms with the system in~(\ref{eq:delay_freq}), i.e., the matrix $M(\mu)$ is replaced by $\mathcal K(s)$, the input matrix $B(\mu)$ and the output matrix $C(\mu)$ are replaced by $B$ and $C$ in~(\ref{eq:delay_freq}), respectively. 

\section{Numerical tests}
\label{sec:results}
We consider three time-delay systems obtained from partial element equivalent circuit (PEEC) modelling and simulation, which transfer problems from the electromagnetic domain to the circuit domain~\cite{Rueh72,Rueh73,Rueh74,Ru17AJ}. When the
propagation delays are explicitly kept for both partial inductances and coefficients of potential, time-delay systems can be derived~\cite{GiLoAn19}. Numerical tests are done with MATLAB R2016b on a computer server with 4 Intel Xeon E7-8837 CPUs running at 2.67 GHz, 1TB main memory, split into four 256 GB partitions.

We test the standard greedy Algorithm~\ref{alg:greedy_delta1pr}, the bi-fidelity greedy Algorithm~\ref{alg:greedy_bifidelity} and the multi-fidelity Algorithm~\ref{alg:greedy_multifidelity} on three time-delay systems. To run the algorithms, we need to initialize the algorithms by doing the following:
\begin{itemize}
\item The samples in the training set $\Xi$, the small set $\Xi_c$ and the large set $\Xi_f$ are taken from the prescribed frequency domain and are generated using the MATLAB function \texttt{linspace}: $\texttt{linspace}(f_l, f_h, \mathrm{cardi})$. Here, $f_l$ is the lowest frequency, $f_h$ is the highest frequency used in \texttt{linspace}, $\mathrm{cardi}$ is the corresponding cardinality of each set. The samples of $s$ are then computed using the relation: $s=2 \pi \jmath f$. 

\item For the multi-fidelity error estimation, we set $\epsilon=0.1$ in Step 15 of Algorithm~\ref{alg:greedy_multifidelity}. 

\item To compute the low-fidelity error estimator, we choose the inverse multiquadratic RBF (IMQ) $\Phi=\frac{1}{1+(a \|\mu-\mu_i\|)^2}$ with the shape parameter $a=30$.
\end{itemize}
We also need to define some variables uniformly used in all the tables and figures:
\begin{itemize}
\item The error $\|H(s)-\hat H(s)\|_{\textrm{max}}$ of the transfer function $\hat H(s)$ of the ROM is finally computed over 1000 samples of $s$ drawn independently of the training sets, resulting in the validated error: Valid.err in Tables~\ref{tab:divider1}-\ref{tab:ustrip2}. 

\item Runtime, the walltime of each algorithm till convergence.

\item Iter., the total number of iterations of each algorithm.

\item $r$, the order of the ROM.

\item The high-fidelity error estimator at each iteration of Algorithm~\ref{alg:greedy_delta1pr} is defined as $\max\limits_{\mu \in \Xi} \tilde \Delta(\mu)$.

\item The bi-fidelity error estimator at each iteration of Algorithm~\ref{alg:greedy_bifidelity} is defined as $\max\limits_{\mu \in \Xi_c} \tilde \Delta(\mu)$.

\item The multi-fidelity error estimator at each iteration of Algorithm~\ref{alg:greedy_multifidelity} is defined as $\max\limits_{\mu \in \Xi_c} \tilde \Delta(\mu)$. Here $\tilde \Delta(\mu)$ will be different from the bi-fidelity error estimator once Step 15 of the algorithm takes action.

\item The true error at each iteration of Algorithm~\ref{alg:greedy_delta1pr} is defined as $\max\limits_{\mu \in \Xi} \|H(s)-\hat H(s)\|_{\textrm{max}}$.

\item The true error at each iteration of Algorithm~\ref{alg:greedy_bifidelity} or Algorithm~\ref{alg:greedy_multifidelity} is defined as $\max\limits_{\mu \in \Xi_c} \|H(s)-\hat H(s)\|_{\textrm{max}}$.
\end{itemize}
Note that $\Xi_c$ could be enriched only by adding samples from $\Xi_f$ to $\Xi_c$. 
As the high-fidelity error estimator $\tilde \Delta(\mu)$ needs to be computed at every sample in $\Xi_c$ at each iteration, samples in $\Xi_c$ whose corresponding error is already smaller than tol can also be removed from $\Xi_c$ to keep the cardinality of $\Xi_c$ constant, so that more computations can be saved. We consider both cases separately and compare their efficiency with respect to both runtime and accuracy.
\subsection{Test 1: results for a model of three-port divider}
\label{subsec:model1}
The model structure of a three-port microstrip power-divider circuit is shown in Fig. \ref{fig:ThreePort} ($P_1, P_2$ and $P_3$ denote the ports). The dimensions of the circuit are [20, 20, 0.5] mm in the $[x, y, z]$ directions and the width of the microstrips is set as 0.8 mm. Furthermore, the dimensions $l_{X1}$, $l_{Y1}$, and $l_{Y3}$ are $9$, $7.2$ and $7.2$ mm, respectively. The relative dielectric constant is $\varepsilon_r=2.2$. All the ports are terminated on 50 $\Omega$ resistances. The order of the FOM is $n=10,626$, and it has $d=93$ delays. The interesting frequency band is $[0,20]$GHz.
\begin{figure}
    \centering
    \includegraphics[width=8cm]{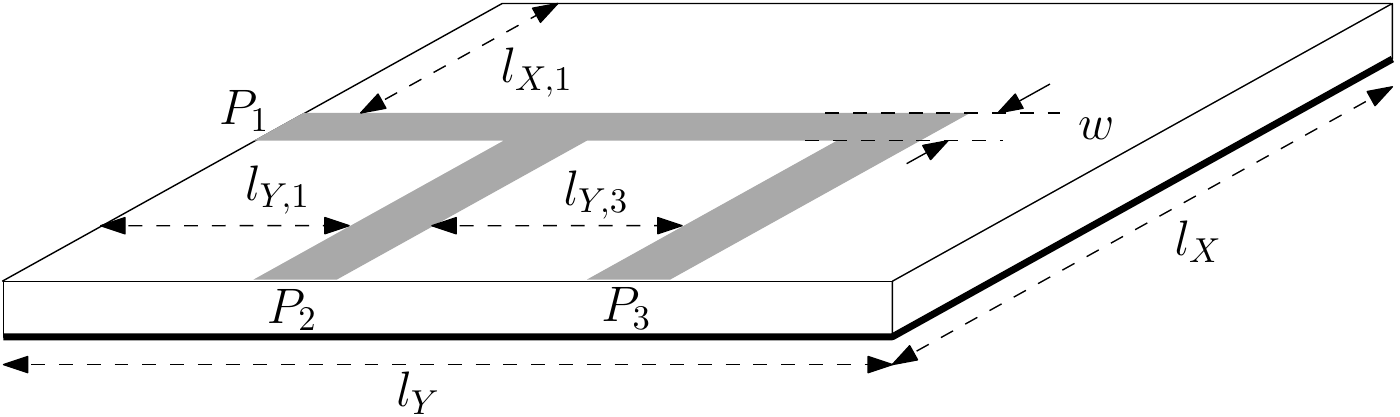}
    \caption{The three-port microstrip power-divider circuit.}
        \label{fig:ThreePort}
\end{figure}

For this model, we use $f_l=1\times10^6, f_h=2\times 10^{10}$ in the function \texttt{linspace}. $|\Xi|=30$ or $|\Xi|=40$ for the standard greedy Algorithm~\ref{alg:greedy_delta1pr}. For Algorithm~\ref{alg:greedy_bifidelity} and Algorithm~\ref{alg:greedy_multifidelity}, $|\Xi_c|=15$ or  $|\Xi_c|=20$ and $|\Xi_f|=100$. The set $\Xi_c$ is then updated during the iteration of the greedy algorithm. The 1000 samples for validating the ROM accuracy are created using the MATLAB function \texttt{logspace}, i.e., \texttt{logspace}$(\log_{10}(f_{l_1}), \log_{10}(f_h), 1000)$. $f_{l_1}=1\times 10^4$.

In Table~\ref{tab:divider1}, we list the results of the three algorithms. The standard greedy algorithm is the slowest. The other algorithms are all much faster and take at least 2 hours less  than the standard algorithm. The bi-fidelity greedy algorithm by enriching $\Xi_c$ only is slower than other bi-(multi-)fidelity algorithms, this is in agreement with our theoretical analysis in Section~\ref{sec:multifidelity}. The multi-fidelity algorithm by adding and removing samples to and from $\Xi_c$ performs the best in terms of runtime and accuracy. Compared to the standard algorithm, it has reduced the offline runtime from 5.6 hours to 1.8 hours, and almost 4 hours have been saved. Finally, a speed-up factor 3.1 is achieved. Except for the bi-fidelity algorithm by adding and removing samples, the other algorithms have produced ROMs with validated errors below the tolerance. The bi-fidelity algorithms perform similarly as the standard algorithm. All three algorithms converge in 14 iterations, and produce ROMs smaller than the others. 

It is worth pointing out that if using fewer samples in $\Xi$ for the standard greedy algorithm, the ROM has a validated error that is slightly larger than the tolerance, as shown in Table~\ref{tab:divider2}, where $|\Xi|=30$. Also, the bi-fidelity greedy algorithms are less accurate if using fewer samples in $\Xi_c$, as shown in Table~\ref{tab:divider2}. There, the same $\Xi_c$ used for the multi-fidelity greedy algorithms are used, but less accurate ROMs are obtained.   

%
%
\begin{table}[!h]
\centering
\caption{Three-port divider: $n=10,626$, $d=93$ delays, tol=0.001, adding/removing a single sample at each iteration.}
\label{tab:divider1}
\begin{tabular}{|l|l|l|l|l|}
\hline
Method & Iter. & Runtime (h) & $r$ & Valid.err  \\ \hline
Alg.~\ref{alg:greedy_delta1pr} (standard, $|\Xi|=40$) &14  & 5.6 & 84 & $9.2\times 10^{-4}$ \\ \hline
Alg.~\ref{alg:greedy_bifidelity} (bi-fidelity, add only, $|\Xi_c|=20$) &14  &3.6 & 84 &$6\times 10^{-4}$   \\ \hline
Alg.~\ref{alg:greedy_bifidelity} (bi-fidelity, add-remove, $|\Xi_c|=20$)& 14 & 2.7 &84 &0.0022  \\ \hline
Alg.~\ref{alg:greedy_multifidelity} (multi-fidelity, add only, $|\Xi_c|=15$) &15  &2.4  &90 &$6.2\times 10^{-4}$   \\ \hline
Alg.~\ref{alg:greedy_multifidelity} (multi-fidelity, add-remove, $|\Xi_c|=15$)&15 & 1.8 & 90&$6.2\times 10^{-4}$  \\ \hline
\end{tabular}
\end{table}
\begin{table}[!h]
\centering
\caption{Three-port divider: $n=10,626$, $d=93$ delays, tol=0.001, smaller $|\Xi|$ and $|\Xi_c|$, adding/removing a single sample at each iteration.}
\label{tab:divider2}
\begin{tabular}{|l|l|l|l|l|}
\hline
Method & Iter. & Runtime (h) & $r$ & Valid.err  \\ \hline
Alg.~\ref{alg:greedy_delta1pr} (standard, $|\Xi|=30$) &14  & 4.2& 84&  0.0017       \\ \hline
Alg.~\ref{alg:greedy_bifidelity} (bi-fidelity, add only,$|\Xi_c|=15$) &13  &2.5 & 78 &0.0026   \\ \hline
Alg.~\ref{alg:greedy_bifidelity} (bi-fidelity, add-remove, $|\Xi_c|=15$)& 13 & 1.9 &78 &0.0088  \\ \hline
\end{tabular}
\end{table}
\begin{table}[!h]
\centering
\caption{Three-port divider: $n=10,626$, $d=93$ delays, tol=0.001, adding/removing $n_\textrm{add}=n_\textrm{del}>1$ samples at each iteration.}
\label{tab:divider3}
\begin{tabular}{|l|l|l|l|l|}
\hline
Method & Iter.& Runtime (h) & $r$ & Valid.err  \\ \hline
Alg.~\ref{alg:greedy_bifidelity} (bi-fidelity, add-remove, $|\Xi_c|=15$, $n_\textrm{add}=2$)& 14 & 2.0 &84 &0.0022  \\ \hline
Alg.~\ref{alg:greedy_bifidelity} (bi-fidelity, add-remove, $|\Xi_c|=20$, $n_\textrm{add}=2$)& 14 & 2.7 &84 &0.0022  \\ \hline
Alg.~\ref{alg:greedy_bifidelity} (bi-fidelity, add-remove, $|\Xi_c|=20$, $n_\textrm{add}=5$)& 14 & 2.7 &84 &0.0022  \\ \hline
Alg.~\ref{alg:greedy_multifidelity} (multi-fidelity, add-remove, $|\Xi_c|=15$, $n_\textrm{add}=2$) &14  &1.7  &84 &0.0039   \\ \hline
Alg.~\ref{alg:greedy_multifidelity} (multi-fidelity, add-remove, $|\Xi_c|=15$, $n_\textrm{add}=5$)&14 & 1.7 & 84&0.0039 \\ \hline
\end{tabular}
\end{table}

In Table~\ref{tab:divider3}, we show the results of the bi-fidelity greedy algorithm and the multi-fidelity greedy algorithm when $n_\textrm{add}=n_\textrm{del}>1$  samples are added or removed from the small training set $\Xi_c$ at each iteration of the algorithm. In general, they produce similar results as those in Table~\ref{tab:divider1} and Table~\ref{tab:divider2} given the same $\Xi_c$. For $|\Xi_c|=15$, the bi-fidelity greedy algorithm with $n_\textrm{add}=n_\textrm{del}=2$  converges in 14 iterations, running one more iteration than with $n_\textrm{add}=n_\textrm{del}=1$ as shown in Table~\ref{tab:divider2}, and generates a ROM with slightly higher accuracy. On the contrary, given $|\Xi_c|=15$, the multi-fidelity greedy algorithm with either $n_\textrm{add}=n_\textrm{del}=2$ or $n_\textrm{add}=n_\textrm{del}=5$ runs one iteration less than in the case of adding/removing a single sample as shown in Table~\ref{tab:divider1}, and constructs ROMs with lower accuracy. Furthermore, it is seen that increasing $n_\textrm{add}=n_\textrm{del}$ from 2 to 5 did not change the results for both algorithms. In general, adding/removing a single sample keeps the algorithms simple but efficient. 

To illustrate the behavior of the error estimators further, we plot the decay of error estimators and their corresponding true errors during the greedy iterations. Since different $\mu^*$ are chosen according to different error estimators, the projection matrix $V$ is updated with different snapshots, leading to ROMs with different accuracy. Consequently, the true errors of the ROMs are expected to be different. 

Figures~\ref{fig:divider1}-\ref{fig:divider2} are the results of the algorithms in Table~\ref{tab:divider1}.
The left part of Figure~\ref{fig:divider1} shows the error of the high-fidelity error estimator at each iteration of Algorithm~\ref{alg:greedy_delta1pr} and the decay of the corresponding true error. The error estimator almost exactly matches the true error at all the iterations. The right part of Figure~\ref{fig:divider1} plots the decay of the bi-fidelity error estimator with respect to the true error. The bi-fidelity error estimator in both of the two cases: only adding (add-only) samples to $\Xi_c$, adding and removing (add-remove) samples to and from $\Xi_c$, can accurately catch the true error. Both cases converge in 14 iterations, but the case ``add-only" is more accurate as can be seen from Table~\ref{tab:divider1}. 

Figure~\ref{fig:divider2} plots the decay of the multi-fidelity error estimator and the corresponding true error decay. For clarity, the two cases ``add-only'' and ``add-remove'' are plotted in two separate figures. The multi-fidelity error estimator is not as accurate as the bi-fidelity error estimator. This is indicated by the error decay from the 10-th iteration to the end in both figures. From the 10-th iteration, the error estimator is below $\epsilon=0.1$, the multi-fidelity error estimation at Step 15 of Algorithm~\ref{alg:greedy_multifidelity} begins to be implemented. For this example, the multi-fidelity error estimator overestimates the true error more often than the bi-fidelity error estimator, it did not choose the interpolation points that lead to error decay as fast as those chosen by the bi-fidelity error estimator. Finally, it uses more iteration steps to converge. Whereas, they still produce ROMs with best accuracy. 
\begin{figure}[!h]
\centering
\includegraphics[width=75mm]{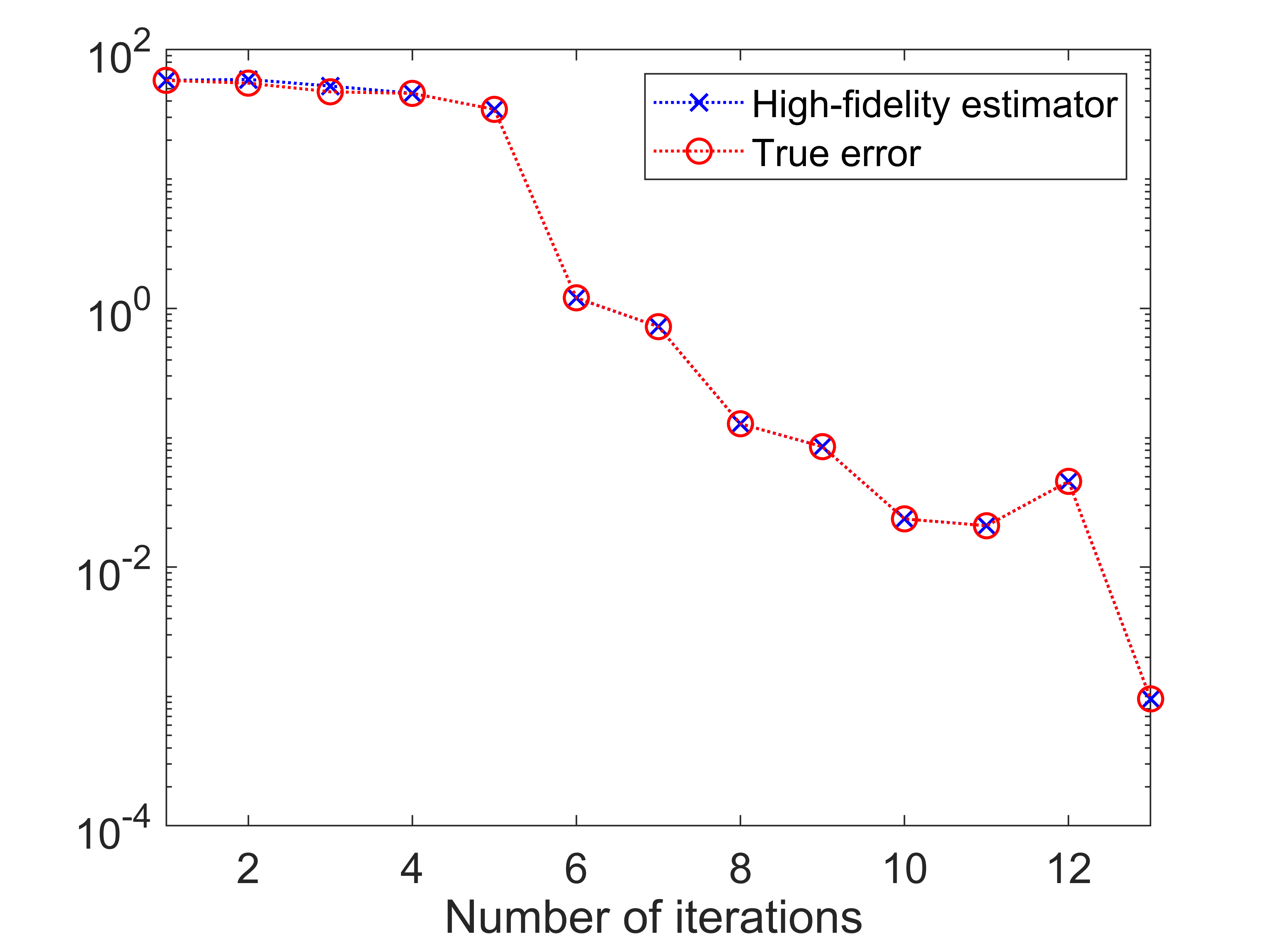}\qquad
\includegraphics[width=75mm]{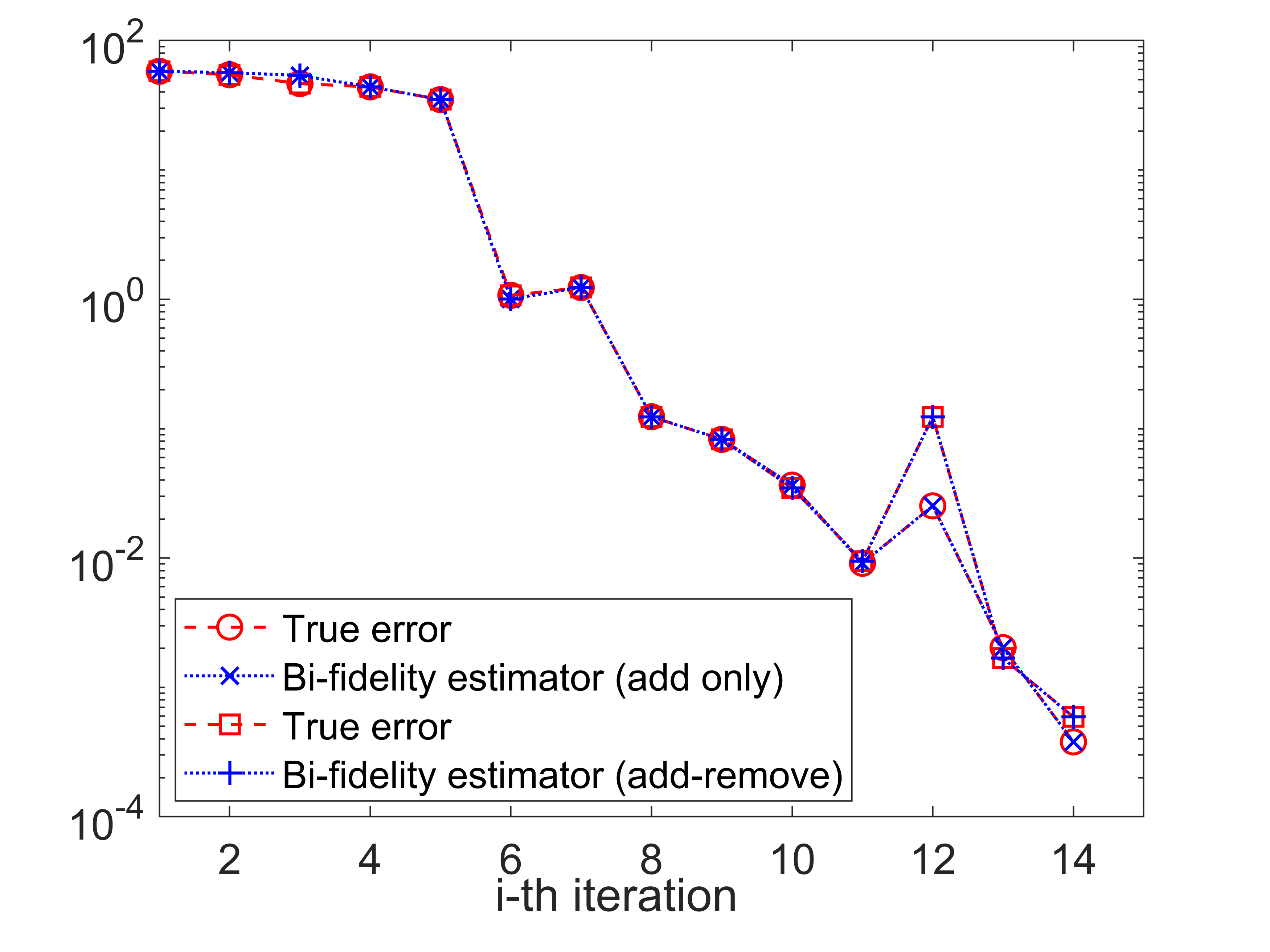}
\caption{Error decay. Left: true error vs high-fidelity error estimator. Right: true error vs bi-fidelity error estimators.}
\label{fig:divider1}
\end{figure}
\begin{figure}[!h]
\centering
\includegraphics[width=75mm]{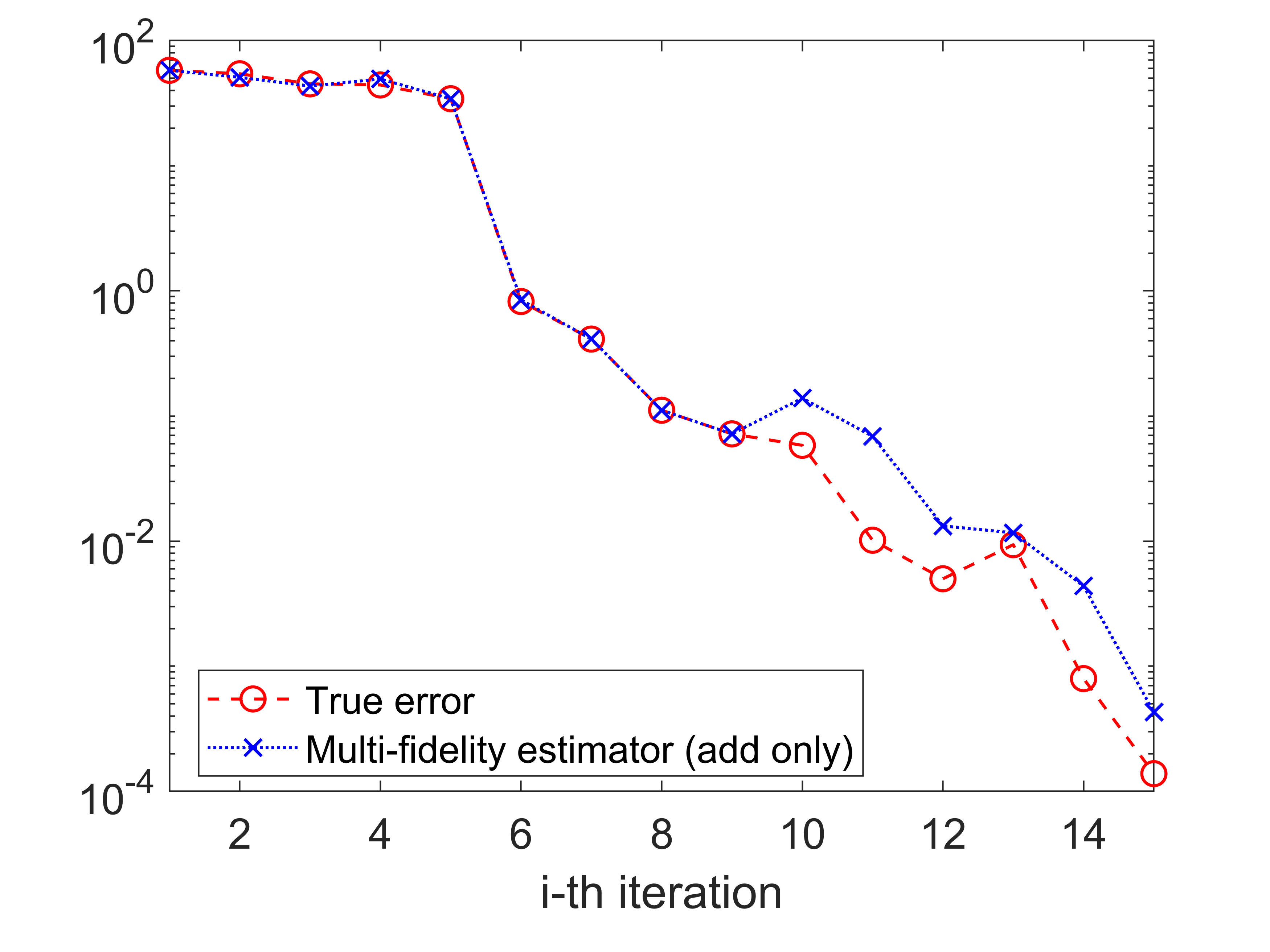}\qquad
\includegraphics[width=75mm]{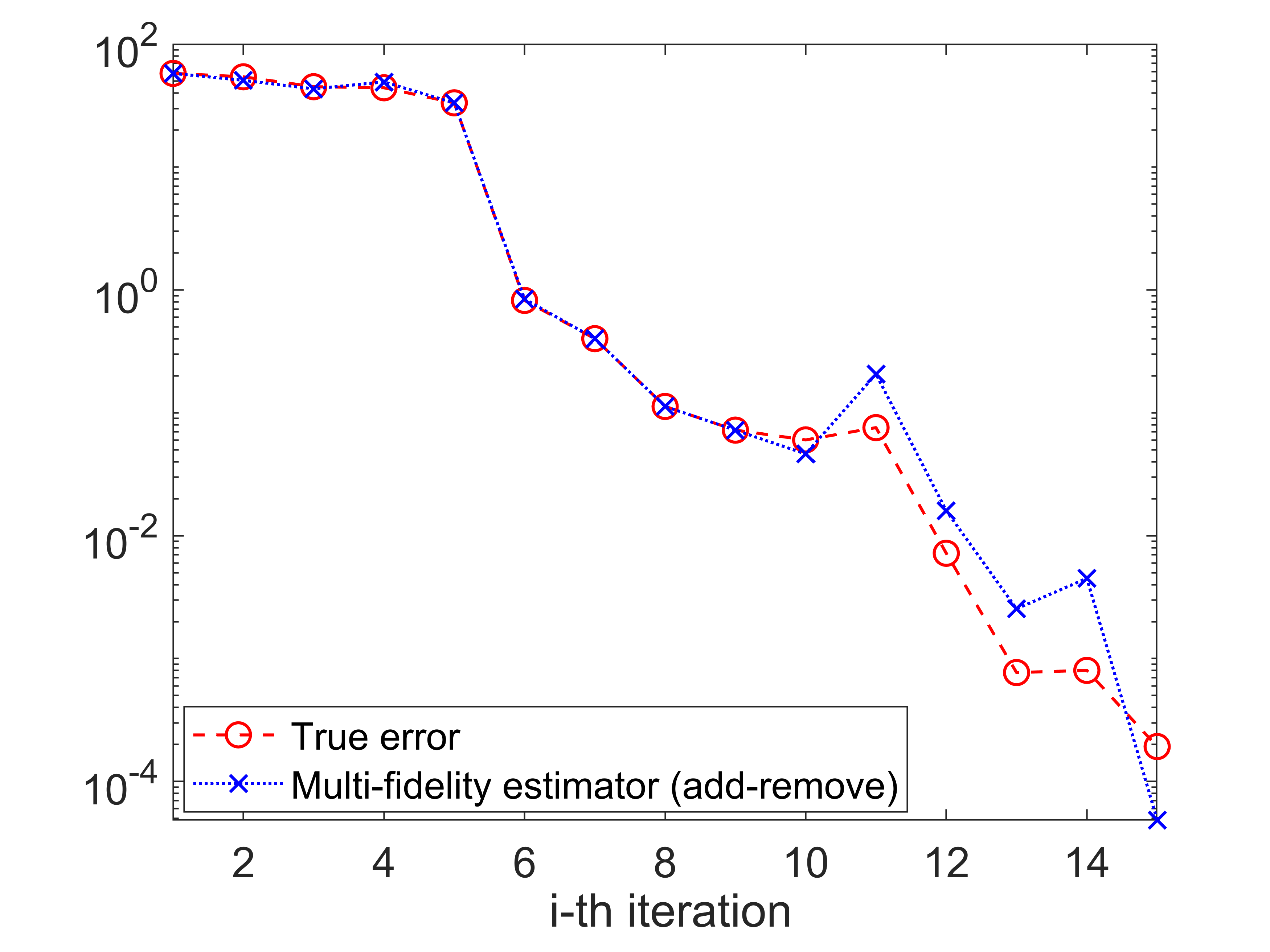}
\caption{Error decay. Left: true error vs multi-fidelity error estimator by only adding samples to $\Xi_c$. Right: true error vs multi-fidelity error estimator by adding and deleting samples to and from $\Xi_c$.}
\label{fig:divider2}
\end{figure}
%
%
%
%
%
%
\subsection{Test 2: results for a model of coplanar microstrips}
\label{subsec:model2}
The second example is a model of a three coplanar microstrips structure shown in Fig. \ref{fig:microstrip}.
The width of the metal strips is $m_w = 0.178$ mm, 
the thickness of metal strips and ground plane is $m_t = 0.035$ mm while 
the left and right wing of the microstrips are $w_d = 3$ mm.
Finally, the length of each strip is $\ell = 5$ cm, the thickness of the dielectric is $d_t = 0.8\,$mm, and the spacing between 2 strips is $s = 0.3\,$mm.
The relative dielectric constant is set to be $\varepsilon_r = 4$ and the conductivity of the metal is assumed to be $\sigma = 5.8 \time 10^7$ S/m.
The six ports, located between the ends of each strip and the ground plane below, are terminated on load resistors $R_{load} = 50 \ \Omega$. The order of the FOM is $n=16,644$, and there are $d=168$ delays. The frequency band of interest is $[0,10]$GHz.

For this model, we take $f_l=1\times10^6, f_h=1\times10^{10}$. We set 30 samples for $\Xi$ in the standard greedy Algorithm~\ref{alg:greedy_delta1pr}, i.e., $|\Xi|=30$. For Algorithm~\ref{alg:greedy_bifidelity} and Algorithm~\ref{alg:greedy_multifidelity}, $|\Xi_c|=10$ or $|\Xi_c|=15$, and $|\Xi_f|=100$. The 1000 samples used for validating the ROM accuracy are generated using the MATLAB function \texttt{linspace}, with $f_l=100$ and the given $f_h$.
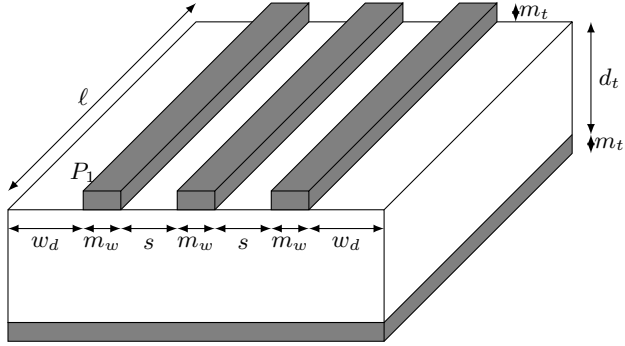
\begin{figure}
\centering
\begin{tikzpicture}[scale=.5]
\fill [gray]  (0,-0.5) rectangle (10,0);
\fill [gray]  (10,-0.5) -- (15,4.5) -- (15,5) -- (10,0) -- (10,-.05);
\draw (0,-0.5) rectangle (10,0);
\draw (10,-0.5) -- (15,4.5) -- (15,5);
\draw (0,0) rectangle (10,3);
\draw (10,0) -- (15,5) -- (15,8) -- (5,8) -- (0,3)
		(15,8) -- (10,3);
\fill [gray]  (3,3) -- (8,8) -- (8,8.5) -- (7,8.5) -- (2,3.5) -- (2,3) -- (3,3);
\fill [gray]  (2,3) rectangle (3,3.5);
\draw (2,3) rectangle (3,3.5);	
\draw (3,3) -- (8,8) -- (8,8.5) -- (3,3.5)
	 (8,8.5) -- (7,8.5) -- (2,3.5);
\fill [gray]  (5.5,3) -- (10.5,8) -- (10.5,8.5) -- (9.5,8.5) -- (4.5,3.5) -- (4.5,3) -- (5.5,3);
\fill [gray]  (4.5,3) rectangle  (5.5,3.5);
\draw (4.5,3) rectangle (5.5,3.5);	
\draw (5.5,3) -- (10.5,8) -- (10.5,8.5) -- (5.5,3.5)
	 (10.5,8.5) -- (9.5,8.5) -- (4.5,3.5);
\fill [gray]  (8,3) -- (13,8) -- (13,8.5) -- (12,8.5) -- (7,3.5) -- (7,3) -- (8,3);
\fill [gray]  (7,3) rectangle (8,3.5);
\draw (7,3) rectangle (8,3.5);
\draw (8,3) -- (13,8) -- (13,8.5) -- (8,3.5)
	 (13,8.5) -- (12,8.5) -- (7,3.5);

\draw [latex-latex] (0,2.5) -- (2,2.5);
\node at (1,2.1) {\footnotesize{$w_d$}};
\draw [latex-latex] (2,2.5) -- (3,2.5);
\node at (2.5,2.1) {\footnotesize{$m_w$}};
\draw [latex-latex] (3,2.5) -- (4.5,2.5);
\node at (3.75,2.1) {\footnotesize{$s$}};
\draw [latex-latex] (4.5,2.5) -- (5.5,2.5);
\node at (5,2.1) {\footnotesize{$m_w$}};
\draw [latex-latex] (5.5,2.5) -- (7,2.5);
\node at (6.25,2.1) {\footnotesize{$s$}};
\draw [latex-latex] (7,2.5) -- (8,2.5);
\node at (7.5,2.1) {\footnotesize{$m_w$}};
\draw [latex-latex] (8,2.5) -- (10,2.5);
\node at (9,2.1) {\footnotesize{$w_d$}};

\draw [latex-latex] (13.5,8) -- (13.5,8.5);
\node at (14,8.25) {\footnotesize{$m_t$}};
\draw [latex-latex] (15.5,8) -- (15.5,5);
\node at (16,6.5) {\footnotesize{$d_t$}};
\draw [latex-latex] (15.5,5) -- (15.5,4.5);
\node at (16,4.75) {\footnotesize{$m_t$}};

\draw [latex-latex] (0,3.5) -- (5,8.5);
\node at (2,6) {\footnotesize{$\ell$}};

\node at (2,4) {\footnotesize{$P_1$}};

\end{tikzpicture}
\caption{Three coplanar microstrips}\label{fig:microstrip}
\end{figure}

The results of the three algorithms are listed in Table~\ref{tab:coplanar1}.
The standard greedy Algorithm~\ref{alg:greedy_delta1pr} takes 19 hours, resulting in a ROM of order $r=132$ with validated error below the tolerance tol. During the greedy iteration, if the small parameter set $\Xi_c$ is enriched only (add only), the greedy algorithm with bi-fidelity error estimation and that with multi-fidelity error estimation converge within the same number of iterations, producing ROMs with the same sizes and validated errors. But the greedy algorithm with multi-fidelity error estimation is almost one hour faster. Similar phenomenon happens to the case ``add-remove''. The greedy algorithm with bi-fidelity error estimation and that with multi-fidelity error estimation also converge within the same number of iterations and construct ROMs with the same sizes and accuracy. The runtimes of both algorithms are much less as compared to their ``add only'' versions. Finally, the greedy algorithm with multi-fidelity error estimation by adding and deleting samples to and from $\Xi_c$ (``add-remove'') is most efficient in terms of both runtime and accuracy. It is more than 3 times faster than the standard greedy algorithm resulting in a speed-up of 4.2x, and produces a ROM with even the smallest validated error. 

We note that using $|\Xi_c|=10$, the ROMs constructed by the bi-fidelity greedy algorithm and the multi-fidelity greedy algorithm with adding the samples only have validated errors larger than the tolerance. If we increase $|\Xi_c|$ from 10 to 15, both algorithms generate ROMs with improved accuracy. The results are presented in Tabel~\ref{tab:coplanar2}. However, the computational time also increases a lot. Again, the multi-fidelity greedy algorithm outperforms the bi-fidelity one w.r.t. both accuracy and runtime. 
In contrast to the results in Tables~\ref{tab:divider1}-\ref{tab:divider2} for the divider model, the results for the coplanar microstrips model in both Tables~\ref{tab:coplanar1}-\ref{tab:coplanar2} show that the bi-fidelity greedy algorithm (``add-remove'') is more accurate than its ``add-only'' version. 
\begin{table}[!h]
\centering
\caption{Three coplanar microstrips: $n=16,644$, $d=168$ delays, tol=0.001, adding/removing a single sample at each iteration.}
\label{tab:coplanar1}
\begin{tabular}{|l|l|l|l|l|}
\hline
Method & Iter. & Runtime (h) & $r$ & Valid.err  \\ \hline
Alg.~\ref{alg:greedy_delta1pr} (standard, $|\Xi|=30$) &11  & 15 & 132 & $8.5\times 10^{-4}$    \\ \hline
Alg.~\ref{alg:greedy_bifidelity} (bi-fidelity, add only, $|\Xi_c|=10$) &11  &6.2 & 132 &0.0033   \\ \hline
Alg.~\ref{alg:greedy_bifidelity} (bi-fidelity, add-remove, $|\Xi_c|=10$)& 11 & 5.3 &132 &$8.2\times 10^{-4}$   \\ \hline
Alg.~\ref{alg:greedy_multifidelity} (multi-fidelity, add only, $|\Xi_c|=10$) &11  &5.3 &132 &0.0033  \\ \hline
Alg.~\ref{alg:greedy_multifidelity} (multi-fidelity, add-remove, $|\Xi_c|=10$)&11  & 4.5 & 132& $8.2\times 10^{-4}$    \\ \hline
\end{tabular}
\end{table}
\begin{table}[!h]
\centering
\caption{Three coplanar microstrips: $n=16,644$, $d=168$ delays, tol=0.001, larger $|\Xi_c|$, adding/removing a single sample at each iteration.}
\label{tab:coplanar2}
\begin{tabular}{|l|l|l|l|l|}
\hline
Method & Iter. & Runtime (h) & $r$ & Valid.err  \\ \hline
Alg.~\ref{alg:greedy_bifidelity} (bi-fidelity, add only, $|\Xi_c|=15$) &11  &10 & 132 &0.0011   \\ \hline
Alg.~\ref{alg:greedy_multifidelity} (multi-fidelity, add only, $|\Xi_c|=15$) &12  &9.3 &144 &$4.4\times 10^{-4}$  \\ \hline
\end{tabular}
\end{table}
\begin{table}[!h]
\centering
\caption{Three coplanar microstrips: $n=16,644$, $d=168$ delays, tol=0.001, adding/removing $n_\textrm{add}=n_\textrm{del}>1$ samples at each iteration.}
\label{tab:coplanar3}
\begin{tabular}{|l|l|l|l|l|}
\hline
Method & Iter. & Runtime (h) & $r$ & Valid.err  \\ \hline
Alg.~\ref{alg:greedy_bifidelity} (bi-fidelity, add-remove, $|\Xi_c|=10$, $n_\textrm{add}=2$)& 10 & 4.7 &120 &0.019   \\ \hline
Alg.~\ref{alg:greedy_bifidelity} (bi-fidelity, add-remove, $|\Xi_c|=10$, $n_\textrm{add}=5$)& 10 & 4.7 &120 &0.019   \\ \hline
Alg.~\ref{alg:greedy_multifidelity} (multi-fidelity, add-remove, $|\Xi_c|=10$, $n_\textrm{add}=2$)&10  & 4.2 & 120& 0.019    \\ \hline
Alg.~\ref{alg:greedy_multifidelity} (multi-fidelity, add-remove, $|\Xi_c|=10$, $n_\textrm{add}=5$)&10  & 4.3 & 120& 0.019  \\ \hline
Alg.~\ref{alg:greedy_multifidelity} (multi-fidelity, add-remove, $|\Xi_c|=15$, $n_\textrm{add}=2$)&13  & 7.6 & 156& 0.0027    \\ \hline
\end{tabular}
\end{table}

Table~\ref{tab:coplanar3} shows the results of the bi-fidelity greedy algorithm and the multi-fidelity greedy algorithm based on adding/removing multiple samples at each iteration. For both cases, i.e., $n_\textrm{add}=n_\textrm{del}=2$ and $n_\textrm{add}=n_\textrm{del}=5$, the algorithms using $|\Xi_c|=10$, converge in 10 iterations, one less iteration than they did with $n_\textrm{add}=n_\textrm{del}=1$ in Table~\ref{tab:coplanar1}, resulting in ROMs with smaller order $r$ but with larger validated errors. If we increase $|\Xi_c|$ to 15, then the multi-fidelity greedy algorithm generates a ROM with reduced error, but takes longer time to converge. The bi-fidelity greedy algorithm behaves similarly and its results for $|\Xi_c|=15$ is not presented to avoid repetition. This example again shows that adding/removing a single parameter at each iteration outperforms the cases with $n_\textrm{add}=n_\textrm{del}>1$, and produces ROMs with desired accuracy.   

In Figure~\ref{fig:coplanar}, we show the {\it important} frequency samples of $f$ selected by the greedy algorithms in Table~\ref{tab:coplanar1}. For the case ``add-remove", we find that the greedy algorithm with bi-fidelity error estimation and the one with multi-fidelity error estimation select the same {\it important} frequency samples. Therefore we only plot one group of samples for both algorithms, see the plot ``bi-(multi-) add-remove" in the figure. For the case ``add-only", both algorithms also select the same {\it important} frequency samples, see the plot ``bi-(multi-) add-only" in the figure. This is in agreement with the results given in Table~\ref{tab:coplanar1} where both algorithms for either case produce the same results. The {\it important} frequency samples selected by the high-fidelity error estimator are mostly different from those selected by the other algorithms. It is seen that the {\it important} frequency samples selected by the (bi-)multi-fidelity estimator could be different from those selected by the high-fidelity estimator. However, both can derive ROMs with good accuracy. 
\begin{figure}[!h]
\centering
\includegraphics[width=75mm]{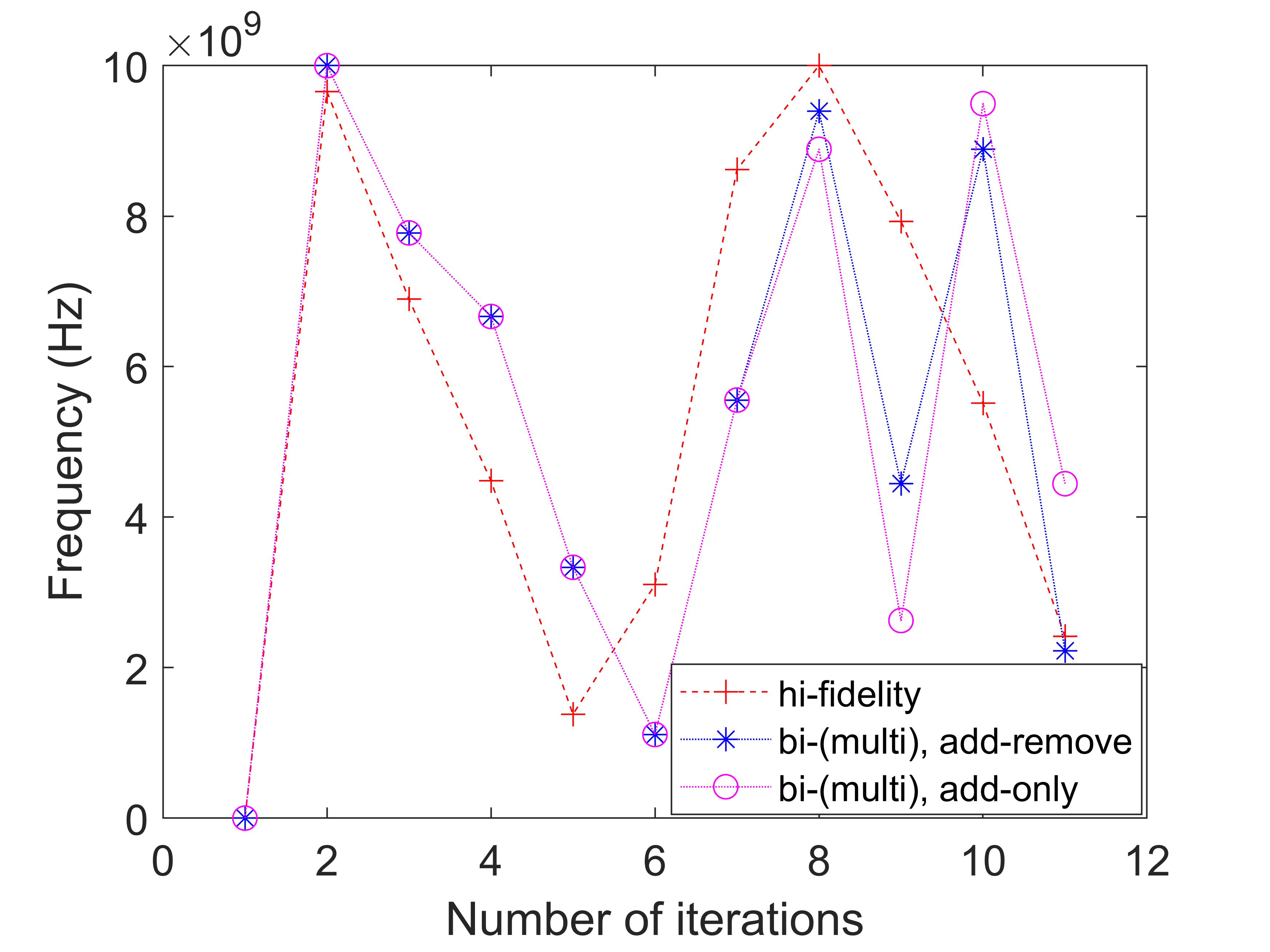}
\caption{{\it Important} parameters selected by the greedy algorithms.}
\label{fig:coplanar}
\end{figure}

The left part of Figure~\ref{fig:coplanar2} gives the error-peak frequencies detected by the multi-fidelity error estimator and the true error, respectively, at each iteration of the greedy algorithm. Those frequencies correspond to the largest values of the error estimator/true error. The error-peak frequency detected by the error estimator at the $i$-th iteration is then selected as the {\it important} frequency sample at the next iteration to update the reduced basis space. From iteration 5, the error-peak frequencies detected by the error estimator are exactly the same as those selected by the true error. This can be explained by the error decay in the right part of the figure. From the $5$-th iteration, the error estimator tightly catches the true error. Although it is less tight at the first 4 iterations, it still follows the overall trend of the error decay and therefore, can still detect reasonable error-peak frequencies. 
This example, once again, supports our theoretical analysis and demonstrates the efficacy of the proposed greedy algorithms with bi-(multi-) fidelity error estimation. 
\begin{figure}[!h]
\centering
\includegraphics[width=75mm]{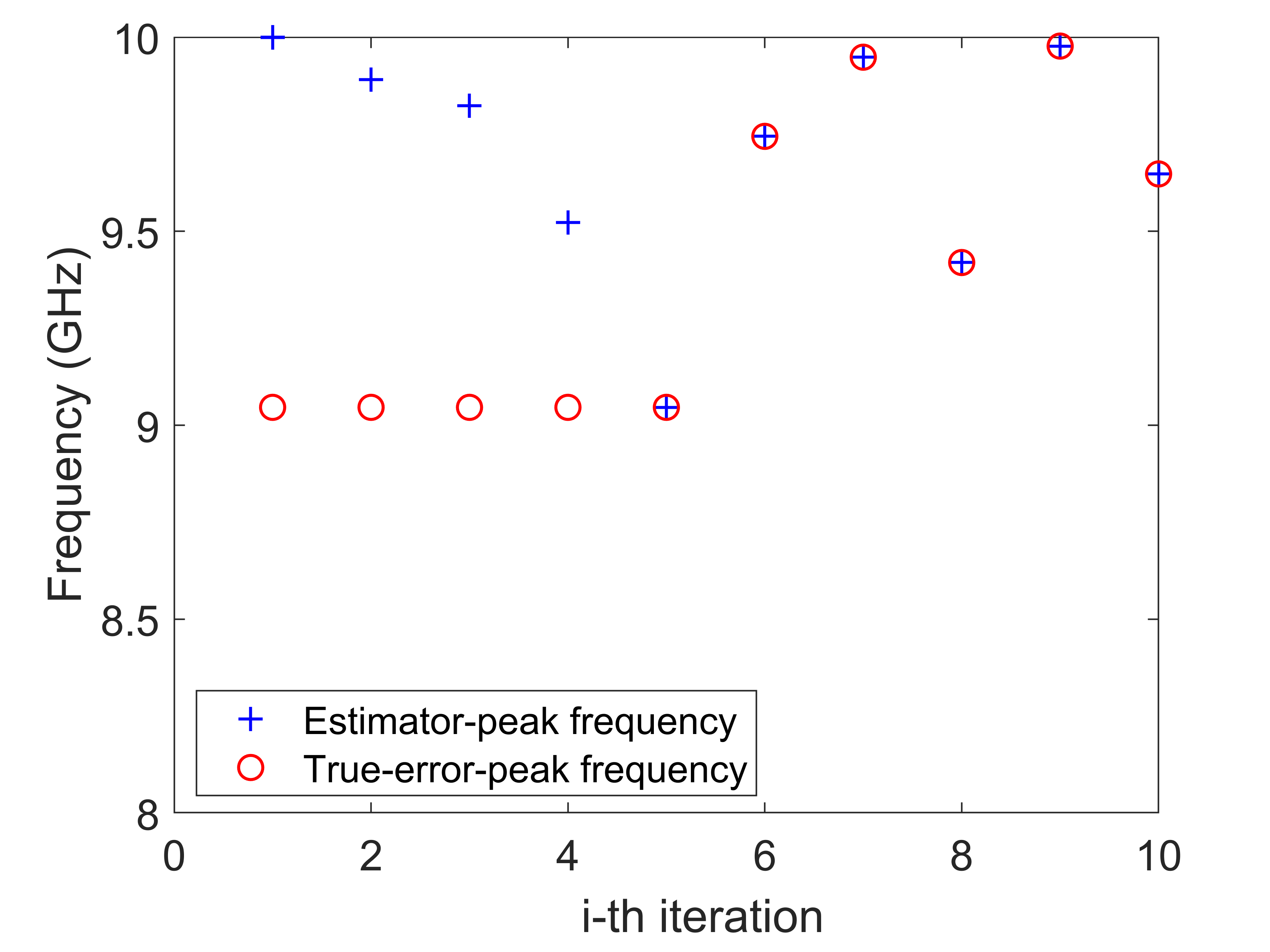}\qquad
\includegraphics[width=75mm]{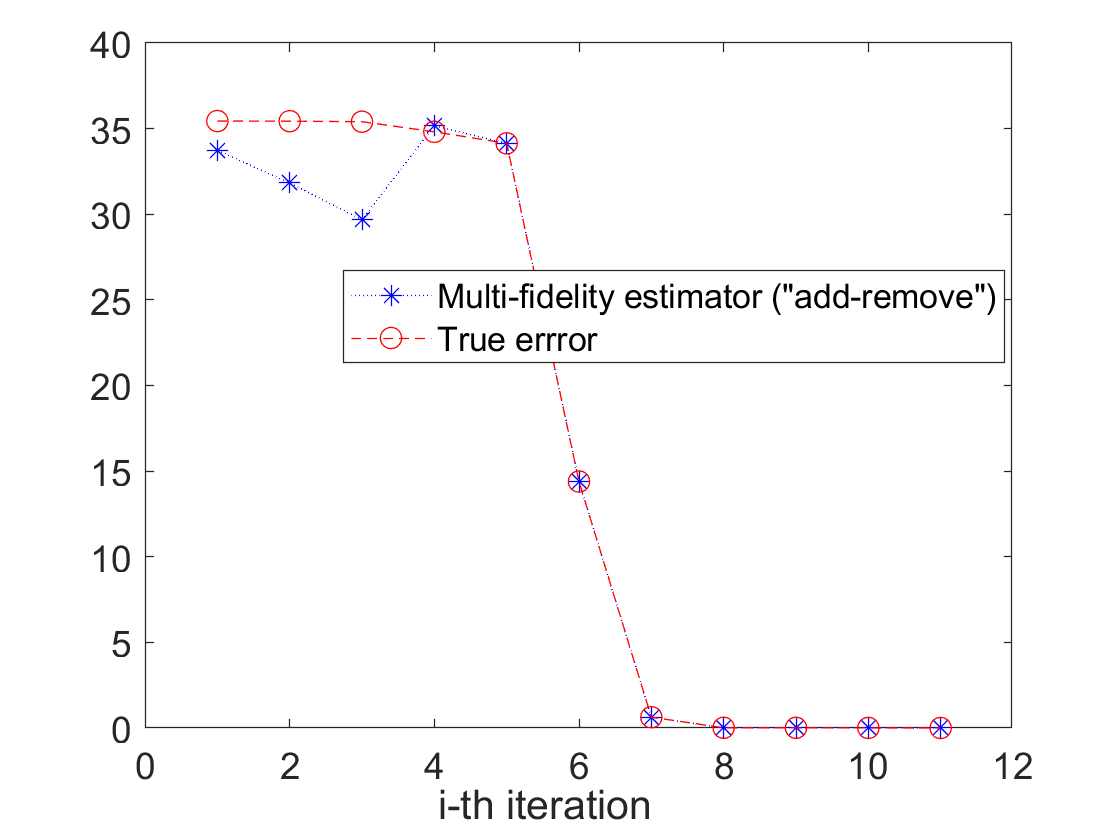}
\caption{Left: Frequencies causing error/estimator peaks. Right: true error vs multi-fidelity error estimator.}
\label{fig:coplanar2}
\end{figure}
\subsection{Test 3: results for a model of microstrip filter}
\label{subsec:model3}
The third example is a model of a microstrip filter. The 3D structure of a microstrip filter is depicted in Fig. \ref{fig:ustrip_filter}. 
The physical dimensions for the geometry of the 3D structure are: 
$w_{z_l}=0.5$ mm,
$w_{z_0}=1.125$ mm,
$w_{z_C}=4$ mm,
$\ell_{z_l}=18.3$ mm,
$\ell_{z_0}=1$ mm,
$\ell_{z_C}=14.1$ mm,
$w=2.4$ cm,
$\ell=2\ell_{z_l} + 2\ell_{z_0} + \ell_{z_C}$,
$t_{m}=100\ \mu$m,
$t_{s}=100\ \mu$m,
$t_{d}=508\ \mu$m.
The two ends of the microstrip are terminated on $50\ \Omega$ resistors. The order of the FOM is $n=12,132$, and there are $d=190$ delays. The interesting frequency band is $[0,5]$GHz. 

We take $f_l=1\times10^5, f_h=5\times10^{9}$ to generate frequency samples in $\Xi_c$ and $\Xi$. We use $|\Xi|=30$ for the standard greedy Algorithm~\ref{alg:greedy_delta1pr}. For Algorithm~\ref{alg:greedy_bifidelity} and Algorithm~\ref{alg:greedy_multifidelity}, $|\Xi_c|=10$, and $|\Xi_f|=100$. The 1000 samples used for computing the validated error are generated using the MATLAB function \texttt{logspace}, with $f_l=10$ and the given $f_h$.
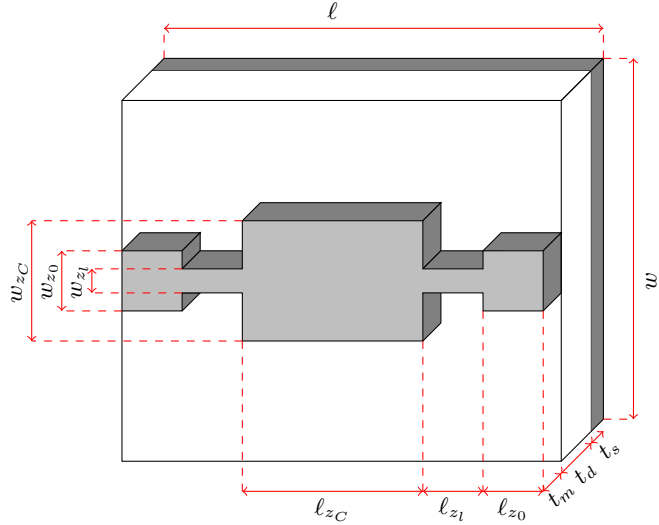
\begin{figure}[htbp]
\centering
\begin{tikzpicture}[scale=.8]
\fill[gray] (1,3.5) -- (1.3,3.8) -- (0.3,3.8) -- (0,3.5);
\draw (1,3.5) -- (1.3,3.8) -- (0.3,3.8) -- (0,3.5);
\fill[gray] (1,3.5) -- (1.3,3.8) -- (1.3,3.5) -- (1,3.2);
\draw (1,3.5) -- (1.3,3.8) -- (1.3,3.5) -- (1,3.2);
\fill[gray](1,3.2) -- (1.3,3.5) -- (2.3,3.5) -- (2,3.2);
\draw  (1,3.2) -- (1.3,3.5) -- (2.3,3.5) -- (2,3.2);
\fill[gray] (5,4) -- (5.3,4.3) -- (2.3,4.3) -- (2,4);
\draw (5,4) -- (5.3,4.3) -- (2.3,4.3) -- (2,4) ;
\fill[gray] (5,4) -- (5.3,4.3) -- ( 5.3, 3.5) -- (5,3.2) ;
\draw (5,4) -- (5.3,4.3) -- ( 5.3, 3.5) -- (5,3.2) ;
\fill[gray] (6,3.2) -- (6.3,3.5) -- (5.3,3.5) -- (5,3.2);
\draw (6,3.2) -- (6.3,3.5) -- (5.3,3.5) -- (5,3.2);
\fill[gray] (6,3.5) -- (6.3,3.8) -- (7.3,3.8) -- (7,3.5);
\draw (6,3.5) -- (6.3,3.8) -- (7.3,3.8) -- (7,3.5);
\fill[gray] (7,2.5) -- (7.3,2.8) -- (7.3,3.8) -- (7,3.5);
\draw  (7,2.5) -- (7.3,2.8) -- (7.3,3.8) -- (7,3.5);

\fill[gray] (1,2.5) -- (1.3,2.8) -- (0.3,2.8) -- (0,2.5);
\draw  (1,2.5) -- (1.3,2.8) -- (0.3,2.8) -- (0,2.5);

\fill[gray] (5,2) -- (5.3,2.3) -- (5.3,3.1) -- (5,2.8);
\draw (5,2) -- (5.3,2.3) -- (5.3,3.1) -- (5,2.8);

\fill[lightgray] (0,2.5) -- (1,2.5) -- (1,2.8) -- (2,2.8) -- (2,2) -- (5,2) -- (5,2.8) -- (6,2.8) -- (6,2.5) -- (7,2.5) -- (7,3.5) -- (6,3.5) -- (6,3.2) -- (5,3.2) -- (5,4) -- (2,4) -- (2,3.2) -- (1,3.2) -- (1,3.5) -- (0,3.5);
\draw (0,0) rectangle (7.3,6);
\draw (0,2.5) -- (1,2.5) -- (1,2.8) -- (2,2.8) -- (2,2) -- (5,2) -- (5,2.8) -- (6,2.8) -- (6,2.5) -- (7,2.5) -- (7,3.5) -- (6,3.5) -- (6,3.2) -- (5,3.2) -- (5,4) -- (2,4) -- (2,3.2) -- (1,3.2) -- (1,3.5) -- (0,3.5);

\draw (0,6) -- (0.5,6.5) -- (7.8,6.5) --  (7.3,6);
\draw (7.3,0) -- (7.8,0.5) -- (7.8,6.5) --  (7.3,6);

\fill[gray] (0.5,6.5) -- (0.7,6.7) -- (8,6.7) -- (7.8,6.5);
\draw (0.5,6.5) -- (0.7,6.7) -- (8,6.7) -- (7.8,6.5);
\fill[gray] (7.8,0.5) -- (8,0.7) -- (8,6.7) -- (7.8,6.5);
\draw (7.8,0.5) -- (8,0.7) -- (8,6.7) -- (7.8,6.5);

\draw [red, dashed] (0,2.5) -- (-1,2.5); 
\draw [red, dashed] (0,3.5) --  (-1,3.5); 
\draw [red,<->]  (-1,2.5) -- (-1,3.5);
\node [rotate = 90] at (-1.2,3) {\footnotesize{$w_{z_0}$}};
\draw [red, dashed] (1,2.8) -- (-0.5,2.8); 
\draw [red, dashed] (1,3.2) --  (-0.5,3.2); 
\draw [red,<->]  (-0.5,2.8) --(-0.5,3.2);
\node [rotate = 90] at (-0.7,3) {\footnotesize{$w_{z_l}$}};
\draw [red, dashed] (2,2) -- (-1.5,2); 
\draw [red, dashed] (2,4) --  (-1.5,4); 
\draw [red,<->]  (-1.5,2) -- (-1.5,4);
\node [rotate = 90] at (-1.7,3) {\footnotesize{$w_{z_C}$}};

\draw [red, dashed] (2,2) -- (2,-0.5);
\draw [red, dashed] (5,2) -- (5,-0.5);
\draw [red, dashed] (6,2.5) -- (6,-0.5);
\draw [red, dashed] (7,2.5) -- (7,-0.5);
\draw [red,<->]  (2,-0.5) -- (5,-0.5);
\node at (3.5,-0.8) {\footnotesize{$\ell_{z_C}$}};
\draw [red,<->]  (6,-0.5) -- (5,-0.5);
\node at (5.5,-0.8) {\footnotesize{$\ell_{z_l}$}};
\draw [red,<->]  (6,-0.5) -- (7,-0.5);
\node at (6.5,-0.8) {\footnotesize{$\ell_{z_0}$}};

\draw [red, dashed] (0.7,6.7) -- (0.7,7.2);
\draw [red, dashed] (8,6.7) -- (8,7.2);
\draw [red,<->]  (0.7,7.2) -- (8,7.2);
\node at (3.5,7.5) {\footnotesize{$\ell$}};
\draw [red, dashed] (8,0.7) -- (8.5,0.7);
\draw [red, dashed] (8,6.7) -- (8.5,6.7);
\draw [red,<->]   (8.5,0.7) -- (8.5,6.7);
\node [rotate = 90] at (8.8,3) {\footnotesize{$w$}};

\draw  [red,<->]  (7,-0.5) -- (7.3, -0.2) ;
\draw  [red,<->] (7.3, -0.2) -- (7.8, 0.3);
\draw  [red,<->] (7.8, 0.3) -- (8, 0.5);
\node [rotate = 45] at (7.3,-0.55) {\footnotesize{$t_m$}};
\node [rotate = 45] at (7.7, -0.2) {\footnotesize{$t_d$}};
\node [rotate = 45] at (8.1,0.2) {\footnotesize{$t_s$}};
\end{tikzpicture}
\caption{Microstrip filter.}
\label{fig:ustrip_filter}
\end{figure}

The results of the high-fidelity greedy algorithm, and the bi-(multi-)fidelity greedy algorithms by adding/removing a single sample at each iteration, are listed in Table~\ref{tab:ustrip1}. All the bi-(multi-)fidelity greedy algorithms produce similar results. The runtime of each is around 1 hour, 3 hours faster than the high-fidelity greedy algorithm. All the ROMs have similar accuracy, with validated errors below the tolerance. 

Table~\ref{tab:ustrip2} further shows the performance of the bi-(multi-)fidelity greedy algorithms by adding and removing multiple samples at each iteration. For this model, all these algorithms behave similarly as they did by  adding/removing a single sample at each iteration. The multi-fidelity greedy algorithm produces ROMs with slightly larger sizes. The ROMs also have larger validated errors, but still fulfill the accuracy requirement. All algorithms converge within 8 iterations, much faster than for the first two examples. This may be due to the much smaller frequency band of interest [0, 5]GHz making the problem much easier to solve and leading to the most efficient performance of all algorithms.

In summary, for all the tested examples, the multi-fidelity algorithm by adding/removing a single sample at each iteration behaves the best w.r.t. both runtime and accuracy. 
\begin{table}[!h]
\centering
\caption{Microstrip filter: $n=12,132$, $d=190$ delays, tol=0.001, adding/removing a single sample at each iteration.}
\label{tab:ustrip1}
\begin{tabular}{|l|l|l|l|l|}
\hline
Method & Iter. & Runtime (h) &$r$ & Valid.err  \\ \hline
Alg.~\ref{alg:greedy_delta1pr} (standard, $|\Xi|=30$) &8  & 2.5 & 32 & $5.6\times 10^{-4}$    \\ \hline
Alg.~\ref{alg:greedy_bifidelity} (bi-fidelity, add only, $|\Xi_c|=10$) &7  &1.1 &28 &$4.6 \times 10^{-4}$   \\ \hline
Alg.~\ref{alg:greedy_bifidelity} (bi-fidelity, add-remove, $|\Xi_c|=10$)& 7 & 1.1 &28 &$4.6 \times 10^{-4}$    \\ \hline
Alg.~\ref{alg:greedy_multifidelity} (multi-fidelity, add only, $|\Xi_c|=10$) &7  &1.0 &28 &$4.6 \times 10^{-4}$   \\ \hline
Alg.~\ref{alg:greedy_multifidelity} (multi-fidelity, add-remove, $|\Xi_c|=10$)&8  & 1.1 & 32& $5.7\times 10^{-4}$    \\ \hline
\end{tabular}
\end{table}
\begin{table}[!h]
\centering
\caption{Microstrip filter: $n=12,132$, $d=190$ delays, tol=0.001, adding/removing $n_\textrm{add}=n_\textrm{del}>1$ samples at each iteration.}
\label{tab:ustrip2}
\begin{tabular}{|l|l|l|l|l|}
\hline
Method & Iter. & Runtime (h) & $r$ & Valid.err  \\ \hline
Alg.~\ref{alg:greedy_bifidelity} (bi-fidelity, add-remove, $|\Xi_c|=10$, $n_\textrm{add}=2$)& 7 & 1.1 &28 &$4.6 \times 10^{-4}$   \\ \hline
Alg.~\ref{alg:greedy_bifidelity} (bi-fidelity, add-remove, $|\Xi_c|=10$, $n_\textrm{add}=5$)& 7 & 1.1 &28 &$4.6 \times 10^{-4}$   \\ \hline
Alg.~\ref{alg:greedy_multifidelity} (multi-fidelity, add-remove, $|\Xi_c|=10$, $n_\textrm{add}=2$)&8  & 1.1 & 32& $9.1 \times 10^{-4}$   \\ \hline
Alg.~\ref{alg:greedy_multifidelity} (multi-fidelity, add-remove, $|\Xi_c|=10$, $n_\textrm{add}=5$)&8  & 1.1 & 32&$9.1 \times 10^{-4}$  \\ \hline
\end{tabular}
\end{table}

\section{Conclusions}
Concepts of bi-fidelity error estimation and multi-fidelity error estimation are proposed in this work. The concept of bi-fidelity error estimation is general and can be applied to any high-fidelity estimator. Although the multi-fidelity error estimation is dependent on the high-fidelity error estimation in consideration, the framework is general to a certain extend and could also be combined with other high-fidelity error estimators. The robustness of the proposed greedy algorithms with bi-fidelity and multi-fidelity error estimation is tested on three large time-delay systems with many delays. Although the standard greedy algorithm converges in a few iterations, the computational complexity in each iteration is high. As a consequence, the runtime is long for such systems. The proposed (bi-)multi-fidelity greedy processes have significantly accelerated the standard greedy algorithm with no loss of accuracy in most cases.

\bibliographystyle{abbrv}
\bibliography{mor,refs}

\end{document}